\documentclass[ 10pt, a4paper, reqno]{amsart}
\usepackage[UKenglish]{babel} 
\usepackage[T1]{fontenc}
\usepackage[utf8]{inputenc} 
\usepackage{amsmath, amssymb, wasysym, amsthm}
\usepackage{enumerate}
\usepackage{float}
\usepackage{cite}
\usepackage{mathrsfs}
\usepackage{mathtools} 
\usepackage[pdfencoding=unicode]{hyperref} 

\newtheorem{theorem}{Theorem}[section]
\newtheorem*{theorem*}{Theorem}
\newtheorem{theorema}{Theorem}

\newtheorem{proposition}[theorem]{Proposition}

\newtheorem{lemma}[theorem]{Lemma}
\newtheorem{corollary}[theorem]{Corollary}

\newtheorem*{conjecture}{Conjecture}
\theoremstyle{definition}
\newtheorem{definition}[theorem]{Definition}
\theoremstyle{remark}

\newtheorem{remark}[theorem]{Remark}

\DeclareMathOperator{\interior}{int}

\DeclareMathOperator{\Scal}{Scal}

\numberwithin{equation}{section}

\newcommand{\R}{\mathbb{R}}

\newcommand{\del}{\partial}
\renewcommand{\d}{\, \textup{d}}

\newcommand{\II}{\textup{I\hspace{-.7pt}I}}

\vfuzz=2pt 
\hfuzz=2pt 
\newlength{\vierz}
\settowidth{\vierz}{$name$}

\date{\today}

\subjclass[2010]{Primary 53C20; Secondary 53C24, 58J32}
\keywords{Scalar curvature rigidity, hemisphere, conformally flat manifold, Min-Oo conjecture} 

\begin{document}

\title[Scalar curvature rigidity]{Scalar curvature rigidity for locally conformally flat manifolds with boundary}
\author{Fabian M. Spiegel}
\address{Max Planck Institute for Mathematics, Vivatsgasse 7,
53111 Bonn, Germany}
\email{spiegel@mpim-bonn.mpg.de}

\begin{abstract}
Inspired by the work of F. Hang and X. Wang and partial results by S. Raulot, we prove a scalar curvature rigitidy result for locally conformally flat manifolds with boundary in the spirit of the well-known Min-Oo conjecture. 
\end{abstract}

\maketitle

\section{Introduction}

The well-known Min-Oo conjecture has fascinated mathematicians for over a decade. Motivated by the positive mass theorem (first proven by Schoen and Yau in \cite{SY:PMT} and \cite{SY:PMT2}) and similar results for nonpositive curvature (e.g. by Bartnik \cite{Bartnik} and himself \cite{Min-Oo2}), Min-Oo conjectured the following \cite[Theorem 4]{Min-Oo}:

\begin{conjecture}
Let $(M,g)$ be an $n$-dimensional compact connected Riemannian manifold with boundary. Assume that 
\begin{enumerate}[(i)]
\item $\Scal(g) \geq \Scal(g_{S^n}) = n(n-1)$,
\item The boundary $\del M$ is totally geodesic and isometric to $S^{n-1}$.
\end{enumerate}
Then $(M, g)$ is isometric to the upper hemisphere $S^n_+ \coloneqq \{x \in S^n \mid x_{n+1} \geq 0 \}$.
\end{conjecture}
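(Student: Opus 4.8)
The plan is to prove the rigidity by a spinorial boundary value argument in the spirit of Witten's proof of the positive mass theorem and its hyperbolic analogues, extending the partial results of Raulot. Since the upper hemisphere carries real Killing spinors --- spinor fields $\psi$ with $\nabla_X \psi = \tfrac{1}{2} X \cdot \psi$ --- the strategy is to show that hypotheses (i)--(ii) force the existence of such a field on all of $(M,g)$, and then to identify the geometry it imposes. Assume first that $M$ is spin (one reduces to this case, working if necessary on the orientation cover, the hemisphere being simply connected and spin). Introduce the modified connection $\hat\nabla_X \coloneqq \nabla_X - \tfrac{1}{2} X \cdot$ on the spinor bundle, so that a $\hat\nabla$-parallel spinor is exactly a Killing spinor for the constant $\tfrac12$, and set $\hat D \coloneqq D + \tfrac{n}{2}$, which annihilates the corresponding Killing spinors.

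First I would record the refined Lichnerowicz formula for $\hat D$. A direct computation from the ordinary identity $D^2 = \nabla^*\nabla + \tfrac14 \Scal$ gives
\[
\hat D^2 = \hat\nabla^* \hat\nabla + \tfrac14\bigl(\Scal - n(n-1)\bigr),
\]
so that the zeroth-order term is governed precisely by the quantity that hypothesis (i) makes nonnegative. Integrating over $M$ and applying the divergence theorem produces a Reilly-type formula
\[
\int_M \Bigl(|\hat\nabla \psi|^2 + \tfrac14\bigl(\Scal - n(n-1)\bigr)|\psi|^2 - |\hat D\psi|^2\Bigr)\, \d V = \int_{\del M} B(\psi)\, \d A,
\]
where the boundary integrand $B(\psi)$ is expressed through the intrinsic Dirac operator of $\del M$, the second fundamental form $\II$, and the restriction $\psi|_{\del M}$. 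Because $\del M$ is totally geodesic and isometric to $S^{n-1}$, its boundary Dirac operator and its own Killing spinors are completely explicit, and the $\II$-dependent part of $B$ vanishes. I would then impose chirality (MIT-bag-type) or Atiyah--Patodi--Singer spectral boundary conditions chosen so that $B(\psi) \geq 0$, with equality characterizing the Killing spinors of the round $S^{n-1}$; elliptic theory for these conditions yields a nontrivial solution of $\hat D\psi = 0$ with prescribed boundary value.

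Feeding this $\psi$ into the Reilly formula, every bulk term and the boundary term is nonnegative, forcing $\hat\nabla \psi \equiv 0$, that is, $\psi$ is a nontrivial real Killing spinor on $(M,g)$. The integrability condition for a Killing spinor then forces $(M,g)$ to be Einstein with $\Ric = (n-1)\, g$. At this point the hypothesis has been upgraded from scalar to Ricci curvature, and I would close the argument with the rigidity theorem of Hang and Wang: a compact manifold with $\Ric \geq (n-1)\, g$ whose boundary is totally geodesic and isometric to $S^{n-1}$ is isometric to $S^n_+$.

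The hard part will be the sign of the boundary integrand $B(\psi)$. Even granting $\II \equiv 0$, it reduces to a coupling between $\psi|_{\del M}$ and the spectrum of the boundary Dirac operator of $S^{n-1}$, and the inequality $B(\psi) \geq 0$ requires that the relevant boundary eigenvalues not straddle the critical value inherited from the bulk Killing constant $\tfrac12$. Scalar curvature alone controls only the zeroth-order Weitzenb\"ock term and gives no a priori handle on this spectral coupling, so the crux is to choose the boundary condition and the prescribed boundary spinor so as to defeat the contributions of the ``wrong-sign'' part of the boundary spectrum. I expect this to be the decisive and most delicate step, and precisely the point at which any additional structural hypothesis on $(M,g)$ --- such as local conformal flatness --- would most naturally be exploited to supply the missing control.
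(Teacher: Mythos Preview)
The statement you are attempting to prove is the Min-Oo conjecture itself, and it is \emph{false}: Brendle, Marques and Neves constructed counterexamples in every dimension $n\geq 3$. The paper does not prove this conjecture; it quotes it as motivation and then proves Theorem~A, which adds the substantial extra hypothesis that $(M,g)$ is locally conformally flat (together with umbilicity and a mean-curvature bound). So there is no ``paper's own proof'' to compare to, and your proposal cannot succeed as written, because any argument for the unrestricted statement must contain an error.

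In your outline the failure point is exactly where you suspect it: the control of the boundary integrand $B(\psi)$. The Brendle--Marques--Neves metrics satisfy (i) and (ii) yet are not isometric to $S^n_+$, hence carry no nontrivial Killing spinor for the constant $\tfrac12$. Consequently there is no choice of elliptic boundary condition for $\hat D$ that simultaneously (a) admits a nontrivial kernel element and (b) forces $B(\psi)\geq 0$ using only $\II\equiv 0$ and the intrinsic geometry of $S^{n-1}$; at least one of these must fail on the counterexamples. Your own closing paragraph anticipates this: you note that an additional structural hypothesis such as local conformal flatness is ``precisely the point'' at which the missing control would enter. That is correct, and it is not a technicality to be patched but the entire content of the problem. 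A smaller issue: passing to the orientation double cover does not produce a spin manifold in general, so the reduction to the spin case is also not justified.

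If your intention was to prove Theorem~A rather than the conjecture, note that the paper's route is quite different from a spinorial one: it uses the developing map of a locally conformally flat manifold into $S^n$, the Schoen--Yau injectivity theorem for the universal cover, and then the scalar-curvature/mean-curvature PDEs for the conformal factor together with the maximum principle, in the style of Hang--Wang.
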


Min-Oo's conjecture was widely believed to be true but proven wrong in 2011 when Brendle, Marques and Neves \cite{BMN} were able to construct a counterexample valid in dimensions $n \geq 3$. Min-Oo's conjecture is true in dimension two, by an old result due to Topogonov \cite{Topogonov}. However, several partial results have been obtained and modified versions of Min-Oo's conjecture hold in many special cases, see e.g. \cite{HangWang}, \cite{HangWang2}, \cite{Eichmair}, \cite{HuangWu}, \cite{BrendleMarques}, \cite{MiaoTam} and the survey article \cite{Brendle}. For example, in \cite{HangWang}, Hang and Wang showed that Min-Oo's conjecture is true for metrics in the conformal class of the standard metric:

\begin{theorem} \label{Thm:HangWang}
Let $g = e^{2\phi}g_{S^n}$ be a $C^2$ metric on $S^n_+$. Assume that
\begin{enumerate}[(i)]
\item $\Scal (g) \geq n(n-1)$ everywhere,
\item The boundary is isometric to $S^{n-1}$.
\end{enumerate}
Then $g$ is isometric to $g_{S^n}$.
\end{theorem}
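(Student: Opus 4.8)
The plan is to use conformal invariance to move the problem onto the flat unit ball, run a maximum principle there, and close the argument with a rigidity identity of Pohozaev / positive–mass type.

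In dimension $n=2$ the statement is contained in Toponogov's theorem, so assume $n\geq 3$. Stereographic projection from the south pole identifies $S^n_+$ with the closed Euclidean unit ball $\overline{B^n}\subset\R^n$: it sends $g_{S^n}$ to $u_0^{4/(n-2)}\delta$, where $\delta$ is the flat metric and $u_0(y)=\bigl(2/(1+|y|^2)\bigr)^{(n-2)/2}$, and it sends $g$ to $u^{4/(n-2)}\delta$ for some positive $u\in C^2(\overline{B^n})$. The curvature bound (i) becomes
\[
-\frac{4(n-1)}{n-2}\,\Delta u \;=\; \Scal(g)\,u^{\frac{n+2}{n-2}} \;\geq\; n(n-1)\,u^{\frac{n+2}{n-2}} \;>\; 0 ,
\]
so $u$ is superharmonic on $\overline{B^n}$, while $u_0$ solves the same equation with equality; note that $u$ is not constant, since a flat metric violates (i).

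Next I would normalise the boundary data. The metric induced by $g$ on $\partial B^n=S^{n-1}$ is $u^{4/(n-2)}\big|_{\partial B^n}\,\sigma$ with $\sigma$ the round metric; by (ii) it is isometric to $\sigma$, so the realising diffeomorphism $\psi$ of $S^{n-1}$ satisfies $\psi^*\sigma=\bigl(u^{-4/(n-2)}\circ\psi\bigr)\sigma$, i.e.\ $\psi$ is a conformal transformation of $(S^{n-1},\sigma)$. For $n\geq 3$ every such $\psi$ extends to a Möbius transformation $\Phi$ of $\overline{B^n}$, and $\Phi^*g$ still satisfies (i) and (ii), now with $\Phi^*g\big|_{\partial B^n}=\sigma$, i.e.\ with conformal factor equal to $1$ on $\partial B^n$. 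Since an isometry between two metrics conformal to $\delta$ is itself a conformal transformation of $\delta$, hence Möbius by Liouville's theorem, the conclusion of the theorem is equivalent to the normalised metric lying in the Möbius orbit of $u_0^{4/(n-2)}\delta$, and that is unaffected by the replacement $g\mapsto\Phi^*g$; so from now on we may assume $u\big|_{\partial B^n}\equiv 1$, and it remains to prove $u\equiv u_0$. With this normalisation the minimum principle gives $u\geq 1$ on $\overline{B^n}$, and, $u$ being non-constant, $u>1$ in the interior with $\partial_\nu u<0$ on $\partial B^n$ by the Hopf lemma; geometrically this says $g\geq\delta$. Note also $u_0\geq 1$, with equality exactly on $\partial B^n$.

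The decisive step is to upgrade $u\geq 1$ to $u\equiv u_0$, and I expect this to be the \emph{main obstacle}: the one-sided bound $\Scal(g)\geq n(n-1)$ controls $\Delta u$ only from one side, so a plain comparison principle cannot close the gap and a genuine rigidity input is needed. My approach would be a Pohozaev-type identity on $\overline{B^n}$: multiplying $-\tfrac{4(n-1)}{n-2}\Delta u=\Scal(g)\,u^{(n+2)/(n-2)}$ by $u$ and by the dilation field $y\cdot\nabla u$, integrating by parts, and inserting the normalised boundary data $u|_{\partial B^n}=1$, $\partial_\nu u<0$, one obtains an identity in which the nonnegative contribution of $\Scal(g)-n(n-1)$ enters with a definite sign; combined with $u\geq 1$ and the explicit boundary integrals made available by $u|_{\partial B^n}=1$, the identity can only be balanced when $\Scal(g)\equiv n(n-1)$ and $u\equiv u_0$. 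Unwinding the conformal identifications then shows that $g$ is isometric to $g_{S^n}$. Alternatively, in line with the paper's motivation, one can run a positive-mass argument: conformally blow $g$ up at a boundary point of $B^n$ to produce an asymptotically flat manifold with noncompact boundary and nonnegative scalar curvature whose mass is shown to be nonpositive, hence zero by the positive mass theorem, forcing the blow-up to be flat and yielding the same conclusion.
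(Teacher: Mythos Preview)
This theorem is quoted from \cite{HangWang}; the present paper does not prove it but simply invokes \cite[Claim~3.5]{HangWang} at the relevant point (see Section~\ref{SSec:Simply-connected}). So there is no in-paper argument to compare against directly, but let me comment on your proposal versus Hang--Wang's actual proof.

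Your normalisation (stereographic projection to $\overline{B^n}$, Poincar\'e extension to arrange $u|_{\partial B^n}=1$, superharmonicity giving $u\geq 1$) is correct and matches the standard setup. The gap is entirely in the final step. A Pohozaev identity for $-\Delta u = R\,u^{(n+2)/(n-2)}$ on a ball is degenerate precisely at this critical exponent: when $R$ is constant the bulk contribution $\tfrac{n-2}{2}\int uf(u)-n\int F(u)$ vanishes identically, and with variable $R$ one picks up a term involving $x\cdot\nabla R$ whose sign is uncontrolled. Your assertion that the identity ``can only be balanced when $\Scal(g)\equiv n(n-1)$ and $u\equiv u_0$'' is not substantiated, and I do not see how to make it go through. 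The positive-mass alternative you sketch is plausible in spirit but far too vague to assess.

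Hang--Wang's actual argument is shorter and stays on $S^n_+$; passing to $B^n$ only obscures it. After normalising so that $u=1$ on the equator, test the differential inequality
\[
-\Delta_{S^n}u\;\geq\; c\bigl(u^{\frac{n+2}{n-2}}-u\bigr),\qquad c=\tfrac{n(n-2)}{4},
\]
against the first Dirichlet eigenfunction $\phi=x_{n+1}$, which satisfies $-\Delta_{S^n}\phi=n\phi$ and $\phi=0$ on $\partial S^n_+$. Two integrations by parts (the unknown boundary flux $\partial_\nu u$ drops out because $\phi$ vanishes there) yield
\[
\int_{S^n_+}\phi\,\Bigl(c\,u^{\frac{n+2}{n-2}}-(n+c)\,u+n\Bigr)\;\leq\;0.
\]
The bracket is a strictly convex function of $u>0$ with a double root at $u=1$ (this uses exactly the numerical coincidence $c\cdot\tfrac{4}{n-2}=n$), hence is nonnegative; since $\phi>0$ in the interior, $u\equiv 1$. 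The point is that choosing a test function which vanishes on the boundary kills the term carrying the mean curvature, which is why no hypothesis on $H$ is needed for $\rho=\tfrac{\pi}{2}$. Your instinct to use an integral identity was right; the multiplier should be $\phi$, not the dilation field $y\cdot\nabla u$.
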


In \cite{HangWang2}, they were able to prove a similar result for domains in $S^n_+$. The proofs rely on the analysis of the equations for conformal scalar and mean curvature. If $n \geq 3$ and $g$,  $\tilde{g} = u^{\frac{4}{n-2}}g$ are conformally equivalent metrics, then
\begin{align*}
\frac{n-2}{4(n-1)}\Scal(\tilde{g}) \; u^{\frac{n+2}{n-2}} &= \frac{n-2}{4(n-1)} \Scal(g) u - \Delta u, \\
\frac{n-2}{2} H(\tilde{g}) u^{\frac{n}{n-2}}  &= \frac{n-2}{2} H(g) u + \frac{\del u}{\del \eta},
\end{align*}
where $H$ denotes the mean curvature computed with respect to the inner unit normal $\nu = -\eta$.

Motivated by Hang and Wang's results, Raulot \cite{Raulot} was able to extend Theorem \ref{Thm:HangWang} to a class of locally conformally flat manifolds, that is, manifolds which are not globally conformally equivalent to the upper hemisphere but locally look like a conformal deformation of the sphere (for a precise definition see Section \ref{Sec:Developing map}). Using the Chern-Gauß-Bonnet formula, he proved:

\begin{proposition}[Corollaire 1 in \cite{Raulot}]
Let $(M^n, g)$ be a compact Riemannian manifold with boundary of dimension $n= 4$ or $n=6$ with $\chi(M) =1$. Suppose that the boundary $\del M$ is umbilic with non-negative mean curvature and isometric to the round sphere $S^{n-1}$. If $(M, g)$ is locally conformally flat with scalar curvature $\Scal \geq n(n-1)$, then $(M, g)$ is isometric to the standard hemisphere. 
\end{proposition}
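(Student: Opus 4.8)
The plan is to combine the structure theory of conformally flat manifolds of positive scalar curvature, the Chern--Gauss--Bonnet formula in the locally conformally flat setting, and Hang and Wang's Theorem~\ref{Thm:HangWang}. First, since $\Scal(g)\geq n(n-1)>0$, I would invoke (a version for manifolds with boundary of) the Schoen--Yau structure theorem: the developing map of the universal cover, $\Phi\colon\widetilde M\to S^n$, is an \emph{injective} conformal immersion, so $\widetilde M$ is conformally diffeomorphic to a domain $\widetilde\Omega\subsetneq S^n$ whose complement is the union of the images of the boundary components of $\widetilde M$ and a closed exceptional set of Hausdorff dimension at most $(n-2)/2$. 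Since $\partial M$ is umbilic and umbilicity is a conformal invariant, each boundary component of $\widetilde M$ is mapped onto an umbilic hypersurface of $S^n$, that is, onto a \emph{round} $(n-1)$-sphere.

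Now the Chern--Gauss--Bonnet formula enters, and this is where the dimension matters. For $n=2m$ with $m\in\{2,3\}$ the Weyl tensor vanishes, so the formula, applied to any metric $g'$ in the conformal class $[g]$, reads
\begin{equation*}
\chi(M)=c_m\int_M\sigma_m(A_{g'})\,\d V_{g'}+\oint_{\partial M}\mathcal T(g'),
\end{equation*}
where $c_m>0$ is universal, $A_{g'}$ is the Schouten tensor of $g'$, $\sigma_m$ its $m$-th elementary symmetric function, and $\mathcal T$ the Chern boundary transgression --- a universal polynomial in the second fundamental form of $\partial M$ and the ambient curvature along $\partial M$. Two evaluations of this identity do the work. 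Applied to $g$, the hypotheses ($\partial M$ umbilic with non-negative, hence constant, mean curvature, isometric to the unit round $S^{n-1}$, and $\Scal(g)\geq n(n-1)$) force, via the Gauss equation, the boundary term $\oint_{\partial M}\mathcal T(g)$ to be an explicit quantity that I expect to be non-negative and to vanish exactly when $\partial M$ is totally geodesic. Applied instead to the \emph{locally spherical} representative $h\coloneqq\Phi^*g_{S^n}$ of the conformal class, for which $\sigma_m(A_h)\equiv\binom{n}{m}2^{-m}$, and working on $\widetilde M$ (where $h$ is locally isometric to $S^n$ and the exceptional set has measure zero), the bulk term becomes a fixed multiple of $\Vol_{g_{S^n}}(\widetilde\Omega)$.

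Comparing the two evaluations with $\chi(\widetilde M)=|\pi_1(M)|\cdot\chi(M)=|\pi_1(M)|$ and with the topology of domains of $S^n$ bounded by round spheres, one concludes that $\pi_1(M)$ is trivial, that $\partial M$ maps onto a \emph{great} sphere and is therefore totally geodesic, and that $\Phi$ identifies $(M,g)$ with a closed hemisphere carrying a metric $g=e^{2\phi}g_{S^n}$. Since $\Scal(g)\geq n(n-1)$ and $\partial S^n_+$ is isometric to $S^{n-1}$, Theorem~\ref{Thm:HangWang} then gives that $g$ is isometric to $g_{S^n}$, i.e. $(M,g)$ is isometric to $S^n_+$.

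I expect the genuine obstacle to be the boundary analysis of the second paragraph: to write the Chern--Gauss--Bonnet boundary transgression $\mathcal T$ explicitly, reduce it by umbilicity and the Gauss equation to a function of the mean curvature and the normal sectional curvatures of $\partial M$, and then use the bound $\Scal(g)\geq n(n-1)$ to pin its sign together with the rigidity case --- this, rather than any conceptual difficulty, is presumably what confines the statement to $n=4$ and $n=6$. A further technical point is to check that the Schoen--Yau exceptional set and the a priori possibly infinite fundamental group do not disturb the volume and Euler-characteristic bookkeeping.
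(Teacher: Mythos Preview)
This proposition is not proved in the present paper: it is quoted from Raulot's article as motivation, and the only information given about the method is the single clause ``Using the Chern--Gau\ss--Bonnet formula, he proved.'' There is therefore no proof in this paper to compare your proposal against line by line. Your outline is in the right spirit in that Chern--Gauss--Bonnet is indeed the declared tool, but what you have written is a programme rather than a proof, and the gap you yourself identify --- computing the boundary transgression $\mathcal T$, reducing it via umbilicity and the Gauss equation, and establishing its sign and rigidity case --- is precisely the substance of Raulot's argument. Saying ``I expect'' it to be non-negative is not a proof.

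Two further issues. First, invoking Schoen--Yau for a manifold \emph{with boundary} is not free: the present paper devotes all of Section~\ref{Sec:Injectivity} (doubling, Escobar's boundary Yamabe solution, smoothing the doubled metric) to justifying exactly this step, so your parenthetical ``(a version for manifolds with boundary of)'' hides real work. Second, applying Chern--Gauss--Bonnet on $\widetilde M$ when $\pi_1(M)$ might a priori be infinite is delicate, since $\widetilde M$ is then noncompact; your closing sentence notes this without resolving it. In fact Raulot's route is more direct: one works on $M$ itself, and the hypothesis $\chi(M)=1$ serves as an equality constraint in the integral inequality coming from Chern--Gauss--Bonnet, forcing rigidity. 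The Schoen--Yau machinery and the Hang--Wang PDE arguments are what the \emph{present} paper uses, in Theorem~\ref{Thm:Theorema}, to \emph{remove} the assumptions $\chi(M)=1$ and $n\in\{4,6\}$ altogether --- a genuinely different approach from Raulot's. Mixing the two, as you do, is not a priori wrong, but it is also not yet a proof.
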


In this paper, we prove a scalar curvature rigidity theorem for locally conformally flat manifolds in the spirit of Min-Oo's conjecture without assumptions on the dimension or the Euler characteristic.

As Min-Oo's conjecture is true in dimension two, we always assume $n \geq 3$.\\

To state our main result, we fix the following definitions: Let $p \in S^n$, $0 < \rho \leq \frac{\pi}{2}$ and  $D_\rho \coloneqq D_\rho(p) \coloneqq \{x \in S^n \mid d^{S^n}(x, p) < \rho \}$ be the geodesic ball of radius $\rho$ around $p$ in $S^n$. Let $H_\rho = \cot(\rho)$ be the mean curvature of the boundary $\Sigma_\rho \coloneqq \del D_\rho$. Note that $\Sigma_\rho$ is isometric to a sphere of radius $\sin(\rho)$.

Then we have 

\begin{theorema} \label{Thm:Theorema}
Let $(M^n, g)$, $n \geq 3$, be a compact connected locally conformally flat Riemannian manifold with boundary. Assume that
\begin{enumerate}[i)]
\item $\Scal (g) \geq n(n-1)$ everywhere, 
\item The boundary $\del M$ is umbilic with mean curvature $H(g) \geq H_\rho$ and isometric to $\Sigma_\rho$.
\end{enumerate}
Then $(M, g)$ is isometric to $\overline{D_\rho}$ with the standard metric.
\end{theorema}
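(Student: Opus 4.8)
The plan is to pass to the developing map of the locally conformally flat structure, use the conformal transformation laws for the scalar and mean curvature recalled in the introduction, and then run a Hang--Wang type rigidity argument on a geodesic ball of $S^n$. Concretely: since $n\ge3$, by Liouville's theorem $(M,g)$ admits a developing map $\Phi\colon\widetilde M\to S^n$ --- a conformal immersion of the universal cover, equivariant for a holonomy homomorphism $\pi_1(M)\to\mathrm{Conf}(S^n)$. Put $\hat g:=\Phi^{*}g_{S^n}$; it is conformal to the lift $\widetilde g$ of $g$, say $\widetilde g=u^{4/(n-2)}\hat g$ with $u>0$ in $C^2$, and since $\Phi$ is locally a conformal diffeomorphism onto $S^n$ one has $\Scal(\hat g)\equiv n(n-1)$. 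The conformal change formulas of the introduction then give, on $\widetilde M$,
\[
-\tfrac{4(n-1)}{n-2}\,\Delta_{\hat g}u+n(n-1)\,u=\Scal(\widetilde g)\,u^{\frac{n+2}{n-2}}\ge n(n-1)\,u^{\frac{n+2}{n-2}},
\]
and, on $\partial\widetilde M$ (where $H(\widetilde g)\ge H_\rho$ by hypothesis),
\[
\frac{\partial u}{\partial\eta}=\frac{n-2}{2}\Bigl(H(\widetilde g)\,u^{\frac{n}{n-2}}-H(\hat g)\,u\Bigr).
\]
As umbilicity is a conformal invariant, $\Phi(\partial\widetilde M)$ is an umbilic hypersurface of $S^n$, i.e.\ a disjoint union of geodesic spheres, on each of which $H(\hat g)\equiv\cot\rho=H_\rho$.

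The core step would be to show that $\Phi$ is in fact an embedding of $M$ onto a closed geodesic ball of $S^n$. Since $\partial M\cong\Sigma_\rho\cong S^{n-1}$ is connected and simply connected ($n\ge3$), each boundary component of $\widetilde M$ is a copy of $S^{n-1}$, and $\Phi$ restricts on it to a codimension-zero immersion of a closed manifold into a geodesic sphere --- hence a covering map, hence a diffeomorphism onto a geodesic sphere $\Sigma\subset S^n$; a collar of that component is then carried diffeomorphically onto a one-sided collar of $\Sigma$, so $\Phi$ maps a neighbourhood of $\partial\widetilde M$ into one of the two closed geodesic balls bounded by $\Sigma$. It then remains to prove globally that $\Phi$ is injective, that $\widetilde M$ has a single boundary component, and that its image is exactly that closed ball: for this I would combine the openness of $\Phi$ and the fact that its image is connected with topological boundary contained in a single geodesic sphere, the maximum principle applied to the interior inequality above (which bounds $u$ from above and below, keeping $\hat g$ non-degenerate and preventing $\Phi$ from wrapping around), and, where needed, the Schoen--Yau injectivity theorem for developing maps of locally conformally flat manifolds with $\Scal\ge0$. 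It follows that $\pi_1(M)$ is trivial, $\widetilde M=M$, and $\Phi$ identifies $M$ with a closed geodesic ball of $S^n$; since $\mathrm{Conf}(S^n)$ acts transitively on geodesic balls of every radius, composing $\Phi$ with a suitable Möbius transformation (which does not change $\Scal(\hat g)\equiv n(n-1)$) lets me assume $M=\overline{D_\rho}$ and $g=u^{4/(n-2)}g_{S^n}|_{\overline{D_\rho}}$, with $u>0$ solving the two relations above, now with $H(\hat g)\equiv\cot\rho$ and with $u^{4/(n-2)}g_{S^n}|_{\partial D_\rho}$ isometric to $\Sigma_\rho$.

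The remaining point is to show $u\equiv1$ on $\overline{D_\rho}$; when $\rho=\tfrac{\pi}{2}$ this is exactly Theorem~\ref{Thm:HangWang}, and in general I would follow the Hang--Wang method. Fix $q\in\partial D_\rho$; let $G>0$ be the Green's function at $q$ of the conformal Laplacian of $g$ with the boundary operator $\tfrac{\partial}{\partial\eta}+\tfrac{n-2}{2}H(g)$ (which exists since $\Scal(g)\ge n(n-1)>0$ and $H(g)\ge H_\rho\ge0$), and form the scalar-flat, minimal-boundary asymptotically flat manifold $\bar g:=G^{4/(n-2)}g$ on $\overline{D_\rho}\setminus\{q\}$. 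Its mass is nonnegative by the positive mass theorem for asymptotically flat manifolds with non-compact boundary, whereas the expansion of $G$ at $q$ together with $\Scal(g)\ge n(n-1)$, $H(g)\ge H_\rho$, and the isometry of $\partial M$ with $\Sigma_\rho$ --- which matches the induced boundary metric with that of the model round ball $\overline{D_\rho}$, whose blow-up at $q$ is flat $\R^n_+$ of zero mass --- forces the mass to be nonpositive. Hence the mass vanishes; the rigidity case of the positive mass theorem gives $\Scal(g)\equiv n(n-1)$ and $H(g)\equiv H_\rho$, and unwinding the conformal factors --- using that the stabilizer of a geodesic ball inside $\mathrm{Conf}(S^n)$ consists of isometries of $S^n$, so the boundary isometry fixes the remaining normalization --- yields $g=g_{S^n}|_{\overline{D_\rho}}$. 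Thus $(M,g)$ is isometric to $\overline{D_\rho}$ with the standard metric.

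I expect the genuine difficulty to be the embedding step: showing that $M$ is simply connected and that the developing map is a global diffeomorphism onto a geodesic ball, with no Kleinian-type behaviour. This is where the scalar-curvature hypothesis has to be used in an essential, non-local way, and it is precisely what replaces Raulot's dimension-restricted Chern--Gauss--Bonnet argument.
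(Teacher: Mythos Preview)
Your outline correctly identifies the architecture of the argument and, crucially, correctly flags the embedding step as the heart of the matter. But that step is not actually carried out in your proposal, and the sketch you give would not close. Two specific issues: first, Schoen--Yau's injectivity theorem applies to \emph{complete} locally conformally flat manifolds without boundary, so you cannot invoke it directly on $\widetilde M$; the paper instead solves the Yamabe problem with minimal boundary (Escobar) to make $\partial M$ totally geodesic, doubles $M$, and applies Schoen--Yau to the universal cover of the double. Second, even once injectivity is known, $\widetilde M$ may have \emph{infinitely many} boundary components and the image in $S^n$ is in general
\[
S^n\setminus\Bigl(\bigcup_i D_{\varepsilon_i}(p_i)\cup\Lambda\Bigr),
\]
with $\Lambda$ a nontrivial limit set when $\pi_1(M)$ is infinite. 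Your line ``topological boundary contained in a single geodesic sphere'' is therefore unjustified, and a maximum-principle bound on $u$ does not by itself rule out multiple holes or a limit set. The paper's mechanism for this is the real content of the theorem: fix one boundary sphere, conformally cap every other hole with a round $\overline{D_{\pi-\rho}}$ (using Poincar\'e extension to match the boundary metric), check that the glued conformal factor still satisfies the scalar-curvature inequality \emph{weakly} across the seams (this uses $H(g)\ge H_\rho$), and then run the Hang--Wang comparison on the resulting domain to force the conformal factor to equal the round one on a whole neighbourhood --- which is impossible if any cap was inserted. This simultaneously kills extra boundary components and forces $\pi_1(M)=1$.

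For the final rigidity once $M$ is identified with $\overline{D_\rho}$, your Green's-function/positive-mass route is a legitimate alternative to the paper's direct appeal to \cite[Proposition~1]{HangWang2}, but note that the paper's approach is considerably shorter here: Hang--Wang's proposition is exactly the statement that a conformal metric on $\overline{D_\rho}$ with $\Scal\ge n(n-1)$, boundary isometric to $\Sigma_\rho$, and $H\ge H_\rho$ must be standard, so once the embedding step is done there is nothing left to prove. Also, a small correction: before any M\"obius normalization, each boundary component of $\widetilde M$ lands on a geodesic sphere of some radius $\varepsilon_j$, so $H(\hat g)=\cot\varepsilon_j$ there, not $\cot\rho$; the equality $H(\hat g)=H_\rho$ only holds \emph{after} you compose with the M\"obius map sending that sphere to $\Sigma_\rho$.
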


For the reader's convenience, we collect all necessary definitions and conventions in Section \ref{Sec:Developing map}.

\begin{remark}
 The bound $\rho \leq \frac{\pi}{2}$ is optimal, see \cite[Claim 2.5]{HangWang}. Furthermore, using \cite[Claim 3.5]{HangWang}, one can show that the assumption on the mean curvature can be dropped provided $M$ is simply-connected and $\rho = \frac{\pi}{2}$, see the proof in Section \ref{SSec:Simply-connected}. This is impossible if $\rho \neq \frac{\pi}{2}$ as we cannot distinguish $\overline{D_{\rho}}$ from $\overline{D_{\pi - \rho}}$ or from geodesic balls in smaller spheres. 
\end{remark}

\begin{remark} \label{Rem:Obata}
The assumption on the boundary being isometric to $\Sigma_\rho$ can be relaxed to $\del M$ being simply-connected and of constant scalar curvature $\csc(\rho)^2(n-1)(n-2)$, see Corollary \ref{Cor:Obata}.
\end{remark}

This paper is structured as follows: In Section \ref{Sec:Developing map}, we present all necessary definitions and basics on locally conformally flat manifolds and conformal transformations. We prove Theorem \ref{Thm:Theorema} under the additional assumption on $M$ being simply-connected as we find the argumentation instructive and motivating. In Section \ref{Sec:Injectivity}, we show that the universal covering of any manifold satisfying the assumptions of Theorem \ref{Thm:Theorema} can be conformally embedded in $S^n$ using a deep result by Schoen and Yau. In Section \ref{Sec:Extension}, we conclude Theorem \ref{Thm:Theorema} using similar methods as Hang and Wang in \cite{HangWang} and \cite{HangWang2}.

\subsection*{Acknowledgment}
It is a pleasure to thank my advisor Werner Ballmann for suggesting this problem to me and many helpful and enlightening discussions. I am also very grateful to Saskia Voß for mathematical conversations and proofreading. Moreover I gratefully acknowledge the support and hospitality of
the Max-Planck-Institute for Mathematics in Bonn.

\section{The developing map} \label{Sec:Developing map}

In this section we fix some definitions and present basics on locally conformally flat manifolds and Möbius transformations. For more background see e.g. \cite{SchoenYau} and \cite{Ratcliffe}. As a corollary, we obtain Theorem \ref{Thm:Theorema} under the additional assumption that $M$ is simply-connected. We start with some basic definitions:\\

\begin{definition}
We say that a hypersurface $\Sigma \subseteq M$ is \emph{umbilic} if the second fundamental form is a multiple of the first fundamental form, that is
\[
\II = H g|_{T\Sigma \times T\Sigma}.
\]
\end{definition}
Note that being umbilic is a conformal invariant; if $\Sigma \subseteq M$ is umbilic with respect to a metric $g$ then it is also umbilic with respect to all metrics of the form $e^{2 \phi}g$.

Throughout this paper, all second fundamental forms and mean curvatures will be computed with respect to the \emph{inner} unit normal $\nu = -\eta$.

\subsection{Locally conformally flat manifolds}

\begin{definition}
A $C^k$-Riemannian metric $g$ on a smooth manifold $M$ is called \emph{locally conformally flat} if for every point $p \in M$, there exists a neighbourhood $U$ of $p$ and $\phi \in C^k (U)$ such that the metric $e^{2\phi}g$ is flat on $U$.
\end{definition}

\begin{remark}
In dimension two, every Riemannian manifold is locally conformally flat, due to the existence of so-called \emph{isothermal} coordinates. A three-dimensional Riemannian manifold is locally conformally flat if and only if its Cotton tensor $C$ vanishes while in dimension $n \geq 4$, a Riemannian manifold is locally conformally flat if and only if the Weyl tensor $W$ vanishes.

Note that the Weyl tensor always vanishes in dimension $n \leq 3$ while in dimensions $n \geq 4$, Weyl and Cotton tensor are related via $C_{ijk}  =\frac{n-2}{n-3} \nabla^a W_{aijk} $, so $W=0$ implies $C = 0$.
\end{remark}

\begin{definition} An immersion $\Phi \colon (M, g) \to (N,h)$ is called \emph{conformal} if it is angle-preserving or, equivalently, if we may write $\Phi^*h = e^{2 \phi} g$ for some function $\phi$.
\end{definition}

If $(M, g)$ is a locally conformally flat manifold, we obtain locally defined isometries (necessarily of class $C^{k+1}$) $f_U \colon (U, e^{2 \phi}g) \to (f(U), g_{\text{eucl.}}) \subseteq \R^n $ which are conformal with respect to the metric $g$. If $M$ is simply-connected, one can use Liouville's theorem on conformal mappings to glue all these together to a conformal map $f \colon M \to \overline{\R^n}$. Composing with a stereographic projection, we obtain a conformal map $\Phi \colon M \to S^n$, called the \emph{developing map} which is unique up to conformal transformations of $S^n$.  

We now present an easy topological lemma which we will use several times:

\begin{lemma} \label{Lem:Easy-topo-lemma}
Let $\Phi \colon M \to  N$ be a local homeo-(diffeo-)morphism with $M$ being compact and $N$ being simply-connected. Then $\Phi$ is bijective, i.e.\ a homeo-(diffeo-) morphism.
\end{lemma}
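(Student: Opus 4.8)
The plan is to show that a local homeomorphism $\Phi \colon M \to N$ with $M$ compact and $N$ simply-connected (and, implicitly, connected) is in fact a covering map, and then invoke the classification of coverings of a simply-connected space. First I would argue that $\Phi$ is proper: since $M$ is compact, every closed subset of $M$ is compact, so its image under the continuous map $\Phi$ is compact, hence closed in the Hausdorff space $N$; in particular preimages of compact sets are compact. A proper local homeomorphism onto a connected, locally compact Hausdorff space is a covering map — this is the standard Ehresmann-type fact, whose proof I would sketch: around each $y \in N$ the fibre $\Phi^{-1}(y)$ is finite (discrete by the local homeomorphism property, compact by properness), say $\{x_1,\dots,x_k\}$, one chooses disjoint open neighbourhoods $U_i \ni x_i$ on which $\Phi$ is a homeomorphism onto its image, and then uses properness to shrink a neighbourhood $V$ of $y$ so that $\Phi^{-1}(V) \subseteq \bigcup_i U_i$, exhibiting $V$ as evenly covered.

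Next I would note that the number of sheets of a covering over a connected base is locally constant, hence constant, so $\Phi$ is a $k$-sheeted covering for some fixed $k \in \N$ (and $k \geq 1$ since $M$ is nonempty, as $\Phi$ being a local homeomorphism forces $M$ nonempty exactly when we take it nonempty — in any case the statement is vacuous or trivial otherwise). Because $N$ is simply-connected, it admits no connected covering of more than one sheet: a connected covering of a simply-connected, reasonably nice space is trivial. Concretely, the image $\Phi(M)$ is open (local homeomorphism) and closed (compactness, as above) in the connected space $N$, hence equals $N$; then $\Phi$ is a surjective covering, and lifting $\id_N$ through $\Phi$ — possible precisely because $\pi_1(N) = 1$ kills the obstruction, using that $N$ is path-connected and locally path-connected as a manifold/locally Euclidean space — produces a section $s \colon N \to M$. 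The section $s$ is itself a local homeomorphism with open image, and its image is a union of sheets, hence open and closed, hence all of $M$ by connectedness of...

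Here is the one genuine subtlety, and the step I expect to be the main obstacle: $M$ need not be assumed connected, whereas the argument above wants connectedness of $M$ to conclude $s$ is surjective. The clean fix is to observe that since $\Phi$ is a $k$-sheeted covering and $N$ is simply-connected, $\Phi$ restricted to each connected component $M_0$ of $M$ is a covering of $N$ (its image is open and closed in $N$, hence all of $N$), and a covering of a simply-connected space by a connected space is a homeomorphism; but two such components would both map homeomorphically onto $N$, contradicting that $\Phi$ is a local homeomorphism near any point of $N$ unless there is only one component — more carefully, $k = \sum_{\text{components } M_0} (\text{sheets of } \Phi|_{M_0}) \geq (\text{number of components})$, and since each $\Phi|_{M_0}$ is a $1$-sheeted cover we get $k$ equal to the number of components; picking any single component already gives a homeomorphism onto $N$, and compactness of the complementary components combined with $\Phi$ being injective on fibres forces the complement to be empty, so $M = M_0$ is connected and $\Phi$ is a homeomorphism. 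Finally, when $M$ and $N$ are smooth and $\Phi$ is a local diffeomorphism, the inverse constructed above is smooth because it is locally the smooth inverse of a local diffeomorphism, so $\Phi$ is a diffeomorphism.
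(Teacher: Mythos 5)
Your main line of argument is the same as the paper's, only with the intermediate steps spelled out: compactness of $M$ makes $\Phi$ proper, the image is open (local homeomorphism) and closed (compact), hence $\Phi$ is a surjective covering, and simple connectivity of $N$ forces the covering to be one-sheeted. Up to that point the proposal is correct and matches the intended proof.

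The place where you go wrong is the ``fix'' for the case of disconnected $M$. Your claim that ``compactness of the complementary components combined with $\Phi$ being injective on fibres forces the complement to be empty'' is not an argument: a covering map is injective on each sheet, \emph{not} on fibres --- a fibre with more than one point is precisely where injectivity fails --- so nothing prevents several components from each mapping homeomorphically onto $N$. Indeed, no argument can rule this out, because the statement is false without connectedness of $M$: take $M = S^2 \sqcup S^2$, $N = S^2$ and $\Phi$ the identity on each copy; this is a local homeomorphism from a compact space to a simply-connected space which is not injective. The correct resolution is not to patch the proof but to add the hypothesis that $M$ is connected (which the paper tacitly uses and which holds in every application of the lemma: $\del M \cong \Sigma_\rho$, the double manifold $\hat{M}$, and the individual boundary spheres $S \subseteq \del \tilde{M}$ are all connected). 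With $M$ connected, your section/lifting argument --- or simply the fact that a connected covering of a simply-connected, locally path-connected space is one-sheeted --- finishes the proof, and the smooth case follows as you say because the inverse is locally the inverse of a local diffeomorphism.
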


\begin{proof}
As $\Phi$ is a local homeomorphism, $\Phi$ is open, so $\Phi(M)$ is open. Since $M$ is compact, $\Phi(M)$ is compact, hence closed. Thus, $\Phi$ is surjective and therefore a covering. As $N$ is simply-connected, $\Phi$ is bijective.
\end{proof}

As a corollary, we obtain a well-known result due to Kuiper (see \cite{Kuiper}):

\begin{corollary} \label{Cor:Kuiper}
A closed simply-connected locally conformally flat manifold is conformally equivalent to $S^n$.
\end{corollary}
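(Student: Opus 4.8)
The plan is to apply the topological lemma just proved to the developing map. Since $M$ is simply-connected and locally conformally flat, the construction recalled above produces a conformal map $\Phi \colon M \to S^n$: the local conformal charts $f_U$ into $\overline{\R^n}$ have Möbius transition maps by Liouville's rigidity theorem (valid for $n \geq 3$; in dimension two one invokes uniformization instead), so simple connectivity allows one to continue them to a single globally defined conformal map $f \colon M \to \overline{\R^n}$, and composing with stereographic projection gives $\Phi$.

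Being a conformal immersion between manifolds of the same dimension $n$, $\Phi$ is a local diffeomorphism. Now $M$ is closed, hence compact, while $S^n$ is simply-connected for $n \geq 2$, so Lemma \ref{Lem:Easy-topo-lemma} applies and shows that $\Phi$ is a diffeomorphism. Writing $\Phi^* g_{S^n} = e^{2\psi} g$, this diffeomorphism is by construction conformal, so $(M,g)$ is conformally equivalent to $S^n$.

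Essentially all the content sits in the global construction of $\Phi$, i.e.\ in Liouville's theorem, which guarantees that the monodromy of the analytic continuation of the local charts is trivial once $\pi_1(M)$ vanishes; without that hypothesis one would only obtain a developing map on the universal cover, whose image could miss part of $S^n$ (as in Kuiper's examples with nontrivial fundamental group). Once $\Phi$ is in hand the corollary is the purely formal observation that a local diffeomorphism from a compact space onto a simply-connected one is a diffeomorphism — exactly Lemma \ref{Lem:Easy-topo-lemma}. So I do not anticipate any genuine obstacle beyond correctly citing the standard developing-map machinery.
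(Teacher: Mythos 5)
Your proof is correct and follows exactly the route the paper intends: construct the developing map $\Phi \colon M \to S^n$ via Liouville's theorem and simple connectivity, observe it is a local diffeomorphism, and apply Lemma \ref{Lem:Easy-topo-lemma} with $M$ compact and $S^n$ simply-connected to conclude $\Phi$ is a conformal diffeomorphism. No gaps.
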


We now show the claim of Remark \ref{Rem:Obata}: First note that an umbilic hypersurface in a locally conformally flat manifold is again locally conformally flat, then use:

\begin{corollary} \label{Cor:Obata}
Let $(M, g)$ be a closed simply-connected locally conformally flat manifold with constant scalar curvature $r^{-2} n(n-1)$. Then $(M, g)$ is isometric with $S^n_r$.  
\end{corollary}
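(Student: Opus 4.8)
The plan is to reduce the statement to Obata's rigidity theorem in the conformal class of the round sphere. Since we assume $n \geq 3$ throughout, Corollary~\ref{Cor:Kuiper} applies and yields a conformal diffeomorphism $\Phi \colon (M, g) \to (S^n, g_{S^n})$. Setting $\bar{g} \coloneqq (\Phi^{-1})^* g$, the map $\Phi$ is an isometry $(M, g) \to (S^n, \bar{g})$, and $\bar{g}$ lies in the conformal class of $g_{S^n}$, so we may write $\bar{g} = u^{\frac{4}{n-2}} g_{S^n}$ for a positive function $u$ on $S^n$. By the conformal change formula for the scalar curvature recalled in the introduction (applied with the unit round metric $g_{S^n}$ in the role of $g$ and $\bar{g}$ in the role of $\tilde{g}$), together with $\Scal(g_{S^n}) = n(n-1)$ and the hypothesis $\Scal(\bar{g}) \equiv r^{-2} n(n-1)$, the function $u$ solves the Yamabe equation on the round sphere,
\begin{equation*}
-\Delta u + \frac{n(n-2)}{4}\, u = \frac{n(n-2)}{4\, r^2}\, u^{\frac{n+2}{n-2}} ;
\end{equation*}
elliptic regularity upgrades $u$ to whatever smoothness is needed below.

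Next I would invoke Obata's theorem: every positive solution of this equation on $(S^n, g_{S^n})$ produces a metric $u^{\frac{4}{n-2}} g_{S^n}$ which is a rescaled pullback of $g_{S^n}$ by a conformal diffeomorphism of $S^n$; in particular $\bar{g}$ has constant sectional curvature. Since $\Scal(\bar{g}) = r^{-2} n(n-1)$, that constant must equal $r^{-2}$, so $(S^n, \bar{g})$ is a compact simply-connected space form of curvature $r^{-2}$, hence isometric to $S^n_r$. Composing with $\Phi$ gives the asserted isometry $(M, g) \cong S^n_r$.

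The only substantive ingredient is the second step. It is worth stressing that the round sphere is precisely the borderline case of the conformal scalar-curvature rigidity dichotomy: the conformal factor need \emph{not} be constant there, so the naive assertion ``constant scalar curvature forces homothety'' fails on $S^n$, and one genuinely needs the full classification of positive solutions of the Yamabe equation on the round sphere (equivalently, the fact that any constant-scalar-curvature metric conformal to $g_{S^n}$ is a rescaled pullback of $g_{S^n}$ under a conformal diffeomorphism). The remaining steps --- Kuiper's theorem, the conformal change formula, and recognising a space form --- are routine.
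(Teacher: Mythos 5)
Your argument is correct and follows essentially the same route as the paper: Kuiper's theorem (Corollary \ref{Cor:Kuiper}) to get a conformal diffeomorphism onto $S^n$, then Obata's rigidity theorem for constant scalar curvature metrics in the conformal class of $g_{S^n}$, and finally the identification of a closed simply-connected space form of curvature $r^{-2}$ with $S^n_r$. You merely unpack Obata's theorem via the Yamabe equation, which the paper cites directly.
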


\begin{proof}
By Corollary \ref{Cor:Kuiper}, such $M$ is conformally equivalent to $S^n$. By a theorem of Obata (see \cite[Theorem 6.1]{Obata}), $g$ has constant sectional curvature. 
\end{proof}

\subsection{Möbius transformations} At this point it is worthwhile having a look at certain conformal transformation groups. We define the \emph{Möbius transformation} groups $M(\overline{\R^n})$ and $M(S^n)$ to be the subgroups of the respective diffeomorphism group generated by reflections in hyperspheres where a hyperplane in $\R^n$ is seen as a hypersphere containing infinity. Note that both groups are isomorphic via conjugation with a stereographic projection.

Liouville's theorem states that every (locally defined) conformal transformation of $\overline{\R^n}$ is actually (the restriction of) a Möbius transformation. So the conformal transformation groups of $\overline{\R^n}$ and $S^n$ are $M(\overline{\R^n})$ and $M(S^n)$, respectively.

We set $M(B^n)$ to be the subgroup of $M(\overline{\R^n})$ containing all Möbius transformations preserving the unit ball $B^n$. It turns out that the homomorphism $M(B^{n+1}) \to M(S^{n})$ induced by restriction is an isomorphsim, that is:

\begin{proposition} \label{Prop:Poincare-Extension}
Let $\varphi \in M(S^n)$. Then there exists a unique $\hat{\varphi}\in M(B^{n+1})$ with $\hat{\varphi}|_{S^n} = \varphi$, called \emph{Poincaré extension} of $\varphi$.
\end{proposition}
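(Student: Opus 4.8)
The plan is to identify the proposition with the assertion that the restriction homomorphism $\operatorname{res}\colon M(B^{n+1}) \to M(S^n)$, $\hat\varphi \mapsto \hat\varphi|_{S^n}$, is a group isomorphism, and to prove this by establishing surjectivity together with injectivity (which together produce the inverse abstractly, so that no separate check of well-definedness is needed). First I would verify that $\operatorname{res}$ really lands in $M(S^n)$: any $\hat\varphi \in M(B^{n+1})$ preserves the open ball $B^{n+1}$, hence also its topological boundary $S^n$, and the restriction of a Möbius transformation of $\overline{\R^{n+1}}$ to an invariant round sphere is a conformal transformation of that sphere, so it lies in $M(S^n)$ by Liouville's theorem. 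As $\operatorname{res}$ is visibly a homomorphism, it then remains to show it is surjective and injective.

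For surjectivity it is enough to hit a generating set of $M(S^n)$, and by definition $M(S^n)$ is generated by reflections in hyperspheres of $S^n$. So I would fix a hypersphere $\Sigma \subseteq S^n$, let $\sigma \in M(S^n)$ be the reflection of $S^n$ in $\Sigma$, and, viewing $S^n = \partial B^{n+1} \subseteq \overline{\R^{n+1}}$, extend $\Sigma$ to the unique hypersphere or hyperplane $\hat\Sigma$ of $\R^{n+1}$ containing $\Sigma$ and meeting $S^n$ orthogonally: this is the linear hyperplane carrying $\Sigma$ when $\Sigma$ is a great subsphere, and otherwise the genuine $n$-sphere centred on the line through the origin and the centre of $\Sigma$, with radius pinned down by the orthogonality relation $d^2 = r_1^2 + r_2^2$ for orthogonal spheres. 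Let $\hat\sigma \in M(\overline{\R^{n+1}})$ be the reflection in $\hat\Sigma$. Two things then need checking: (a) $\hat\sigma$ preserves $B^{n+1}$ — since $\hat\Sigma \perp S^n$, the involution $\hat\sigma$ maps $S^n$ to itself, hence permutes the two components of the complement, and as $\hat\Sigma$ meets $B^{n+1}$ it cannot interchange them; and (b) $\hat\sigma|_{S^n} = \sigma$ — both are conformal involutions of $S^n$ with fixed-point set $S^n \cap \hat\Sigma = \Sigma$ interchanging the two caps of $S^n \setminus \Sigma$, which determines them uniquely. Composing then exhibits all of $M(S^n)$ in the image.

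For injectivity I would take $\psi \in M(B^{n+1})$ with $\psi|_{S^n} = \id$ and argue $\psi = \id$. One route, staying within Möbius geometry, is to invoke the classification of Möbius transformations of $\overline{\R^{n+1}}$ fixing a hypersphere pointwise — they are just the identity and the reflection in that hypersphere (cf.\ \cite{Ratcliffe}) — and then to note that the reflection in $S^n$ (inversion in the unit sphere) swaps $B^{n+1}$ with its exterior and so is not in $M(B^{n+1})$, leaving $\psi = \id$. Alternatively, one may equip $B^{n+1}$ with the Poincaré metric, recall that $M(B^{n+1})$ is exactly the isometry group of the resulting hyperbolic space $\mathbb{H}^{n+1}$ and that $S^n$ is its ideal boundary, and observe that an isometry fixing every point at infinity fixes every complete geodesic and is therefore the identity. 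Either way $\ker \operatorname{res}$ is trivial; combined with surjectivity this makes $\operatorname{res}$ an isomorphism, so for every $\varphi \in M(S^n)$ the Poincaré extension $\hat\varphi := \operatorname{res}^{-1}(\varphi)$ exists and is unique.

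I expect the surjectivity step to carry the bulk of the work and to be where care is needed: one must take for granted (or establish) that $M(S^n)$ is generated by reflections in its hyperspheres, treat hyperplanes of $\R^{n+1}$ uniformly with genuine spheres as hyperspheres through $\infty$, and verify the two geometric claims (a) and (b) about $\hat\sigma$, which are elementary but call for a short computation in coordinates. The identification $M(B^{n+1}) = \operatorname{Isom}(\mathbb{H}^{n+1})$ used in the second injectivity argument is standard and can be cited, e.g.\ from \cite{Ratcliffe}.
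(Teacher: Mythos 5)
Your proposal is correct and follows essentially the same route as the paper: extend each generating reflection of $M(S^n)$ to the reflection in the unique generalized hypersphere of $\overline{\R^{n+1}}$ meeting $S^n$ orthogonally in the given hypersphere, which preserves $B^{n+1}$. The only difference is that you spell out the uniqueness/injectivity step (a Möbius transformation fixing $S^n$ pointwise is the identity or the inversion in $S^n$, and the latter does not preserve $B^{n+1}$), which the paper leaves to the reference \cite{Ratcliffe}.
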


\begin{proof}
As $M(S^n)$ is generated by reflections in hyperspheres, it is enough to extend these. Let a reflection $\sigma$ in a hypersphere $S \subseteq S^n$ be given. Let $\tilde{S}$ be the generalized hypersphere ($\tilde{S}$ may be a hyperplane) orthogonal to $S^n$ which intersects $S^n$ in $S$. Then the reflection of $\overline{\R^{n+1}}$ in $\tilde{S}$ extends $\sigma$ and leaves $B^n$ invariant. For more details, see \cite[Section 4.4] {Ratcliffe}
\end{proof}

\subsection{The simply-connected case} \label{SSec:Simply-connected} We are now in the position to prove Theorem \nolinebreak \ref{Thm:Theorema} under the condition that $M$ is simply-connected. Although the proof is similar to the non-simply-connected case, we present it separately to illustrate the technique and to argue why the assumption on the mean curvature can be dropped in this case provided $\rho = \frac{\pi}{2}$.

We proceed in three steps: We will first show that the developing map of $M$ is injective and compose with a stereographic projection and a Möbius transformation to obtain a ``nice'' image in $\R^n$, then use the results of Hang and Wang. Note that we will show the injectivity of the developing map without the simply-connectedness of $M$ using a deep result by Schoen and Yau in Section \ref{Sec:Injectivity}.

\begin{proof}[Proof (Theorem \ref{Thm:Theorema}, simply-connected case)] $\;$ \\ \indent
\emph{Step 1:} As $M$ is simply-connected, there exists a developing map $\Phi \colon M \to  S^n$. Since $\del M$ is umbilic and being umbilic is a conformal invariant, the image of $\del M$ must be umbilic in $S^n$, that is, it is contained in a hypersphere $\Sigma \subseteq S^n$. Applying Lemma \ref{Lem:Easy-topo-lemma} to $\Phi|_{\del M}$, we see that $\Phi|_{\del M} \colon \del M \to \Sigma$ is a diffeomorphism. Composing with a Möbius transformation of $S^n$, if necessary, we may assume that $\Sigma$ is the equator $\del S^n_+ = \{x \in S^n \mid x_{n+1} = 0 \}$.

Consider the double manifold $\hat{M} = M \underset{\del M}{\cup} (-M)$. Here we write $-M$ for the second copy of $M$ in $\hat{M}$ in order to distinguish it from $M$ itself. We extend $\Phi$ to a map $\hat{\Phi} \colon \hat{M} \to S^n$ in the following way: We write $\Phi = (\Phi_1, \ldots ,\Phi_{n+1})$ and set
\[
\hat{\Phi}(x) \coloneqq 
\begin{cases}
\Phi(x) \qquad \qquad &\text{ if } x \in M,\\
(\Phi_1 (x), \ldots, \Phi_n(x) , -\Phi_{n+1}(x)) &\text{ if } x \in -M.
\end{cases}
\]
Then $\hat{\Phi}$ is well-defined and continuous because $\Phi_{n+1}(x) = 0$ for $x \in \del M$. Moreover, it is a local homeomorphism. Lemma \ref{Lem:Easy-topo-lemma} implies that $\hat{\Phi}$ is a homeomorphism and hence $\Phi$ is injective. Furthermore, the image is either $S^n_+$ or $S^n_-$. \\

\emph{Step 2:} Let $\{S, N = -S \}$ be a pair of antipodal points. By composing $\Phi$ with a Möbius transformation of $S^n$, we may assume that the image of $\Phi$ is $\overline{D_{\rho}(S)}$. As $\Sigma_{\rho}(S) = \del D_{\rho}(S)$ equipped with the metric $(\Phi^{-1})^*g$ is isometric to $\Sigma_\rho$, we obtain an isometry $f \colon \Sigma_\rho(S) \to (\Sigma_\rho(S) , ((\Phi^{-1})^*g)|_{\Sigma_\rho(S)})$ which is conformal with respect to the standard metric. Using Poincaré extension (see Proposition \ref{Prop:Poincare-Extension}), we can extend $f$ to a Möbius transformation of $S^n$ preserving $D_{\rho}(S)$, call it $F$. Composing $\Phi$ with $F^{-1}$, we may assume that $((\Phi^{-1})^*g)|_{\Sigma_\rho(S)}$ coincides with the standard metric.\\

\emph{Step 3:} By construction, we have obtained a metric $ (\Phi^{-1})^*g$ on $D_{\rho}(S)$ which is conformal to the standard metric and agrees with it on the boundary. We can now apply the results from Hang and Wang:

 If $\rho = \frac{\pi}{2}$, Claim 3.5 from \cite{HangWang} implies that $ (\Phi^{-1})^*g$ is the standard metric, hence $M$ is isometric to $S^n_+$ (without any assumptions on the mean curvature). In all other cases, $H(g) \geq H_\rho$ and \cite[Proposition 1]{HangWang2} imply that  $(\Phi^{-1})^*g$ is the standard metric.
\end{proof}

\section{Injectivity of the developing map} \label{Sec:Injectivity}

If $M$ is not simply-connected, we do not know whether there is a conformal map to $S^n$. However, we can pass to the universal covering $\tilde{M}$ to obtain a developing map $\Phi \colon \tilde{M} \to S^n$. In this section we establish that, under the assumptions of Theorem \ref{Thm:Theorema}, this developing map is injective (see also \cite[Theorem 1.4]{LiNguyen}):

\begin{proposition} \label{Prop:Existence-of-injective-developing-map} Let $(M, g)$ be a compact connected locally conformally flat manifold with boundary. Assume that $M$ has positive scalar curvature and that $\del M$ is umbilic and simply-connected with non-negative mean curvature. Let $\tilde{M} \to M$ be the universal covering. Then there exists an injective conformal map $\Phi \colon \tilde{M} \to S^n$  which is a conformal diffeomorphism onto its image. The image is of the form
\[ 
C = C(\varepsilon_i, p_i,\Lambda) \coloneqq S^n \setminus \left( \bigcup_{i } D_{\varepsilon_i} (p_i) \cup \Lambda \right),  
\] 
where the $D_{\varepsilon_i} (p_i)$ are geodesic balls in $S^n$ with disjoint closures and $\Lambda$ is the so-called \emph{limit set}, a closed subset of Hausdorff dimension at most $\frac{n-2}{2}$.
\end{proposition}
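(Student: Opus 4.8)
The plan is to reduce to the boundaryless case by doubling $M$ across its boundary and then to invoke the Schoen--Yau theorem on conformally flat manifolds of positive scalar curvature (\cite{SchoenYau}). If $\del M = \emptyset$ there is nothing to double and the statement is that theorem applied to $M$ itself (with no balls removed), so assume $\del M \neq \emptyset$ and set $\hat M = M \cup_{\del M} (-M)$. First I would equip $\hat M$ with a locally conformally flat structure extending that of $M$: near a component of $\del M$ a local developing map carries $\del M$, which is umbilic, into an umbilic hypersurface of $S^n$, hence into a round hypersphere $\Sigma$; reflection in $\Sigma$ is a Möbius transformation of $S^n$, and using it to glue the developing data of $M$ to that of $-M$ across $\del M$ produces a well-defined locally conformally flat structure on $\hat M$. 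This is precisely what the umbilicity hypothesis is for.

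Next I would verify that the conformal class of $\hat M$ contains a metric of positive scalar curvature. The assumptions $\Scal(g) > 0$ on the interior and $H(g) \geq 0$ on $\del M$ say exactly that the boundary Yamabe invariant of $(M, [g])$ --- equivalently, the sign of the first eigenvalue of the conformal Laplacian of $(M,g)$ under the Robin-type boundary operator attached to $H(g)$ --- is positive (test with the constant function), and a reflection argument transplants this to positivity of the Yamabe invariant of the closed manifold $\hat M$. Granting this, Schoen--Yau's theorem (\cite{SchoenYau}) applies to $\hat M$: the developing map $\hat\Phi \colon \widetilde{\hat M} \to S^n$ is injective, hence --- being an injective local diffeomorphism --- a conformal diffeomorphism onto the open set $\hat\Phi(\widetilde{\hat M}) = S^n \setminus \hat\Lambda$, where $\hat\Lambda$, the limit set of the holonomy representation $\pi_1(\hat M) \to M(S^n)$, is closed of Hausdorff dimension at most $\tfrac{n-2}{2}$.

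It remains to descend to $\tilde M$. Because $\del M$ is simply-connected, van Kampen gives $\pi_1(\hat M) \cong \pi_1(M) * \pi_1(M)$, so $\pi_1(M)$ embeds into $\pi_1(\hat M)$ and the preimage of $M \subseteq \hat M$ under $\widetilde{\hat M} \to \hat M$ is a disjoint union of copies of the universal cover $\tilde M$. Fix one copy $\tilde M_0$ and let $\Phi$ be $\hat\Phi$ restricted to it, identified with $\tilde M$; then $\Phi$ is an injective conformal immersion, hence a conformal diffeomorphism onto its image. Each boundary component of $\tilde M_0$ is a copy of $\del M$, so by umbilicity and Lemma \ref{Lem:Easy-topo-lemma} (applied with that component as compact source and the simply-connected $S^{n-1}$ as target) it is mapped diffeomorphically by $\hat\Phi$ onto a round hypersphere $R_i$; the $R_i$ are pairwise disjoint because $\hat\Phi$ is injective and distinct boundary components are disjoint. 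Let $D_{\varepsilon_i}(p_i)$ be the open geodesic ball bounded by $R_i$ lying on the side away from the connected set $\hat\Phi(\interior \tilde M_0)$. A short point-set argument using injectivity of $\hat\Phi$ then identifies $\hat\Phi(\tilde M_0) = S^n \setminus \big( \bigcup_i D_{\varepsilon_i}(p_i) \cup \Lambda \big)$ with $\Lambda \coloneqq \hat\Lambda \setminus \bigcup_i D_{\varepsilon_i}(p_i)$, which is closed (it is $\hat\Lambda$ minus an open set) and of Hausdorff dimension at most that of $\hat\Lambda$, hence at most $\tfrac{n-2}{2}$; and the closures $\overline{D_{\varepsilon_i}(p_i)}$ are pairwise disjoint, since a common point of $\overline{D_{\varepsilon_i}(p_i)}$ and $\overline{D_{\varepsilon_j}(p_j)}$ with $i \neq j$ would --- the balls being disjoint --- lie in $R_i \cap R_j = \emptyset$. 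This yields the asserted form of the image.

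I expect the delicate step to be the transport of positivity to the double: the naively doubled Riemannian metric need not be better than $C^{1,1}$ across $\del M$, so one should argue with the smooth flat conformal structure and the conformally invariant boundary Yamabe invariant rather than with the metric directly --- alternatively, one may simply invoke \cite[Theorem 1.4]{LiNguyen}, which is precisely this statement. Everything after the doubling, namely Schoen--Yau for the closed manifold $\hat M$ together with the covering-space bookkeeping and the injectivity of $\hat\Phi$, is formal.
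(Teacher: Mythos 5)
Your overall strategy is the same as the paper's: double $M$ across its umbilic boundary, apply the Schoen--Yau injectivity/limit-set theory to the closed double, and then descend to the universal cover of $M$ and identify the image. Your covering-space bookkeeping (van Kampen with $\pi_1(\del M)=1$, so that a component of the preimage of $M$ in the universal cover of $\hat M$ really is $\tilde M$) and your identification of the image as $S^n$ minus disjointly-closed balls and a piece of the limit set are sound and at the right level of detail. The issue is the step you yourself flag as delicate and then do not carry out: producing on the closed double a \emph{smooth} locally conformally flat metric of positive scalar curvature (equivalently, a positive Yamabe class together with a smooth representative) to which Schoen--Yau applies. Since $\del M$ is only umbilic, not totally geodesic, the naively doubled metric is merely Lipschitz across the gluing hypersurface, and the conformal factor relating it to any smooth metric in your reflected flat conformal structure is again only Lipschitz; so ``argue with the conformally invariant boundary Yamabe invariant rather than with the metric'' is precisely the point that needs a proof, not a remark. (Also, your parenthetical ``test with the constant function'' proves nothing about positivity of the relevant first eigenvalue --- a single test function bounds the infimum from above; positivity instead follows from the Rayleigh quotient because $\Scal>0$ and $H\geq 0$ make all its terms suitably nonnegative.)

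The paper closes exactly this gap by a different mechanism: since $\Scal(g)>0$ and $H(g)\geq 0$, the boundary Yamabe invariant is positive, and Escobar's solution of the Yamabe problem with boundary for locally conformally flat manifolds \cite{Escobar} yields a conformal metric $g'$ with constant positive scalar curvature and \emph{totally geodesic} boundary; the double of $g'$ is then $C^{2,1}$, and using the local developing map near $\del M$ (a neighbourhood of the doubling hypersurface is conformally a tubular neighbourhood of the equator in $S^n$ with factor $\mu^2$, $\mu\in C^2$) one replaces the conformal factor by a smooth one while keeping $\Scal>0$. Only then are the results of \cite{SY}, \cite{SchoenYau} invoked on the universal cover of $\hat M$ --- and even there the paper is careful to use strict positivity (Proposition 3.3(i) of \cite{SY} for $n\geq 4$, Witten's positive mass theorem for $n=3$) rather than the $\Scal\geq 0$ statement, which rests on an unpublished extension of the positive mass theorem. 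Your fallback of quoting \cite[Theorem 1.4]{LiNguyen} is legitimate as a citation but amounts to citing the proposition itself rather than proving it. So: correct skeleton and correct endgame, but the construction of the smooth positive-scalar-curvature conformal metric on the double --- the technical heart of the paper's proof --- is missing from your argument.
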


Before presenting the proof, we shortly comment on the limit set: Let $\Gamma \subseteq M(S^n)$ be a subgroup of the conformal transformation group of $S^n$. Then the \emph{limit set} of $\Gamma$ is defined as
\[
\Lambda (\Gamma) \coloneqq \left\{ x \in S^n \mid \text{there exist } x' \in S^n \text{ and } \gamma_i \in \Gamma \text{ such that } \gamma_i x' \to x \right\}.
\]
Now if $(M, g)$ is a locally conformally flat manifold we obtain a developing map $\Phi \colon \tilde{M} \to S^n$, hence $\pi_1(M)$ (viewed as the group of deck transformations) acts on $S^n$ by Möbius transformations. We obtain a homomorphism $\rho \colon \pi_1(M) \to M(S^n)$ called \emph{holonomy representation} of $\pi_1 (M)$ in $M(S^n)$. If the scalar curvature of $M$ is non-negative, one can show that the developing map is injective, the holonomy representation is one-to-one and $\Lambda(\rho(\pi_1(M))) = \del ( \Phi(\tilde{M})) = S^n \setminus \Phi(\tilde{M})$. For more background see \cite{SchoenYau}.\\

The main idea of the proof is to apply a deep result on the injectivity of developing maps by Schoen and Yau (see \cite{SY} and \cite{SchoenYau}) to the double manifold $\hat{M} = M \underset{\del M}{\cup} (-M)$. Then we analyse the behaviour of the boundary components under the developing map. 

\begin{remark}
One may want to use \cite[Theorem 3.5]{SchoenYau} (or \cite[Theorem 4.5]{SY}) which states that the developing map is injective provided $\Scal \geq 0$. However, Schoen and Yau remark that ``for this application it is necessary to extend the positive energy theorems to the case of complete manifolds; that is, assuming that the manifold has an asymptotically flat end and
other ends which are merely complete. This extension will be carried out in a future work.'' To the author's knowledge, such a generalization of the positive mass theorem is widely believed to be true but no such extension has yet been published. Since we do not want to rely on this extension to be true, note that we can use \cite[Proposition 3.3, (i)]{SY} in dimension $n \geq 4$, while in dimension $3$, any orientable manifold is spin so Witten's version of the positive mass theorem (which can be extended to the complete case) implies that \cite[Theorem 4.5]{SY} holds. See also Appendix A of \cite{CH}, where the same problem occured.
\end{remark}

\begin{proof} We proceed in three steps: First we show how to obtain a smooth metric on the double manifold conformal to the canonical one, then apply Schoen and Yau's results to its universal cover. In the last step, we show that the image has the claimed form.\\

\emph{Step 1:} 
If $M$ is conformally diffeomorphic to $S^n_+$, $M$ is simply-connected and the results from Section \ref{SSec:Simply-connected} apply. In all other cases, our assumptions on scalar- and mean curvature imply that the Yamabe invariant of $M$ is positive and hence by Escobar's solution to the Yamabe problem on manifolds with boundary for locally conformally flat manifolds (see \cite{Escobar}), there exists a metric $g'$ in the conformal class of $g$ with (constant) positive scalar curvature and totally geodesic boundary. 

Consider the double manifold $\hat{M} = M \underset{\del M}{\cup} (-M)$. Since $\del M$ is totally geodesic with respect to $g'$, the canonical metric $\hat{g}'$ extending $g'$ is $C^{2, 1}$ as can be seen in Fermi coordinates (see e.g. \cite[Appendix]{Escobar}). One can now check that the results from \cite{SY} (see also \cite{SchoenYau}) hold for $C^{2, 1}$-metrics, but in this special case we can even find a smooth metric with positive scalar curvature conformal to $\hat{g}$:

In fact, for $\varepsilon >0 $ small enough, we take a neighbourhood $U$ of $\del M$ in $M$ diffeomorphic to $\del M \times [0, \varepsilon)$. As the latter is simply-connected, we obtain a conformal map $\Phi_U \colon U \to S^n$. As $\del M$ is umbilic, $\Phi (\del M)$ is also umbilic and hence contained in a hypersphere. By Lemma \ref{Lem:Easy-topo-lemma}, $\Phi_U|_{\del M}$ is a diffeomorphism onto that hypersphere. Hence, as $\Phi_U$ is an immersion, we may shrink $U$ to obtain an injective conformal map. This proves that there exists a neighbourhood of $\del M$ in $\hat{M}$ which is isometric to a tubular neighbourhood of the equator in $S^n$ equipped with a metric of the form $\mu^2 g_{S^n}$, where $\mu$ is $C^2$ and smooth away from the equator. Approximating $\mu$ with a suitable smooth function, we find a smooth metric $g^*$ on $\hat{M}$ conformal to $\hat{g}'$ with positive scalar curvature.\\

\emph{Step 2:} 
Let $\pi \colon N \to \hat{M}$ be the universal covering of $\hat{M}$. Equipped with the Riemannian metric induced by $g^*$, $N$ is a simply-connected complete locally conformally flat manifold with positive scalar curvature. From the results of \cite{SY} and \cite{SchoenYau}, it follows that the developing map $\phi \colon N \to S^n$ is injective and $\Lambda' \coloneqq S^n \setminus \phi(N)$ is the limit set of $\pi_1 (\hat{M})$. As $g^*$ has positive scalar curvature, the results of Schoen and Yau imply that the Hausdorff dimension of $\Lambda'$ is at most $\frac{n-2}{2}$. 

Let $\tilde{M} \subseteq N$ be a connected component of $\pi^{-1}(M)$. Then $\pi \colon \tilde{M} \to \nolinebreak M$ is a covering and since $\phi \colon N \to S^n$ was injective, $\Phi \coloneqq \phi|_{\tilde{M}}$ is also injective. Furthermore, the metrics $\tilde{g}$ and $\tilde{g}^*$ on $\tilde{M}$ induced by $g$ and $g^*$, resp., are conformally equivalent (since $g$ and $g^*$ are) and so $\Phi$ is also conformal with respect to $\tilde{g}$. We conclude that $\Phi \colon \interior(\tilde{M}) \to \Phi(\interior(\tilde{M}))$ is a conformal diffeomorphism. To show that it is actually a diffeomorphism of $\tilde{M}$, we need to verify that $\Phi$ is also a local diffeomorphism near the boundary, i.e. $\Phi(\del \tilde{M}) \subseteq \nolinebreak \del  \Phi(\tilde{M})$. 

To see this, note that  $\del \tilde{M}$ is diffeomorphic to disjoint copies of $S^{n-1}$, since the covering $\tilde{M} \to M$ induces a covering $\del \tilde{M} \to \del M \cong S^{n-1}$ and the latter is simply-connected. In particular, any connected component $S$ of $\del \tilde{M}$ is diffeomorphic to a sphere. Arguing as above, we see that for any such $S$,  $\Phi|_S \colon S \to \Phi(S)$ is a diffeomorphism, where $\Phi(S)$ is some geodesic sphere in $S^n$. Hence $S^n \setminus \Phi(S)$ has two connected components. As $\tilde{M} \setminus \del \tilde{M}$ is connected and $\Phi$ is injective, it follows that $\Phi(\tilde{M} \setminus \del \tilde{M})$ lies in exactly one of these connected components, thus $\Phi(\del \tilde{M}) \subseteq \del \Phi(\tilde{M})$\\

\emph{Step 3:} 
We have seen that $\Phi \colon \tilde{M} \to \Phi(\tilde{M})$ is a conformal diffeomorphism and $\del \Phi(\tilde{M}) = \Phi(\del \tilde{M}) \cup \Lambda$, where $\Lambda \subseteq \Lambda'$ is a closed subset. Note that $\Lambda$ is empty if $\tilde{M}$ is compact, i.e. $|\pi_1(M)| < \infty$. The set $\Phi(\tilde{M})$ can thus be identified with 
\[ 
C = C(\varepsilon_i, p_i,\Lambda) \coloneqq S^n \setminus \left( \bigcup_{i } D_{\varepsilon_i} (p_i) \cup \Lambda \right),  
\] 
where, by injectivity, the $D_{\varepsilon_i} (p_i)$ are geodesic balls in $S^n$ with disjoint closures and $\Lambda = \left( S^n \setminus  \bigcup_{i } D_{\varepsilon_i} (p_i) \right)  \cap \Lambda'$.  Then $\Phi \colon \tilde{M}  \to C \subseteq S^n$ is a conformal diffeomorphism as claimed. 
\end{proof}

\section{Extension of the metric} \label{Sec:Extension}

As we have seen, any $M$ satisfying the hypotheses of Theorem \ref{Thm:Theorema} can be conformally covered by a subset $C \subseteq S^n$ as above. We want to argue similarly to the simply-connected case, but certain problems arise: First of all, if $M$ is not simply-connected, then the image of $\tilde{M}$ under the developing map has more than one ``hole'' $D_{\varepsilon_i} (p_i)$ and if $\pi_1(M)$ is infinite, then the limit set $\Lambda$ is non-empty. In this section we show how to extend the metric $\tilde{h} \coloneqq (\Phi^{-1})^*\tilde{g}$ to $S^n \setminus \left( D_{\varepsilon_i} (p_i) \cup \Lambda \right)$ for a given particular $i$. The basic idea is to glue in spherical caps $\overline{D_{\pi - \rho}}$ along the boundaries; we make this construction more explicit below. We then show that the extended metric is isometric with the standard one contradicting the existence of more than one of those caps.\\

Pick any $i$.  For every $j \neq i$, we extend the metric $\tilde{h}$ to $D_{\varepsilon_j} (p_j)$ in the following way:  

Let $\{S, N=-S \}$ be any pair of antipodal points. Let $\phi_j$ be a conformal transformation of $S^n$ mapping $D_{\varepsilon_j} (p_j)$ to $D_{\pi - \rho}(N)$. By assumption, $(\Sigma_{\rho}(S),\phi_j^*\tilde{h}|_{\Sigma_{\rho}(S)})$ is isometric to $\Sigma_{\rho}(S)$ with the standard metric, therefore there exists an isometry $f_j \colon   \Sigma_{\rho}(S) \to (\Sigma_{\rho}(S),\phi_j^*\tilde{h}|_{\Sigma_{\rho}(S)})$. As $\phi_j^*\tilde{h}$ is conformal to the standard metric, $f_j$ is a Möbius transformation of $\Sigma_{\rho}(S)$. Using the Poincaré-extension (see Proposition \nolinebreak \ref{Prop:Poincare-Extension}), we can extend $f_j$ to a Möbius transformation of $S^n$ preserving $D_{\rho}(S)$, call this extension $F_j$. 

Then $h_j \coloneqq F_j^* \phi_j^* h$ is a metric on the set $C_j \coloneqq (\phi_j \circ F_j)^{-1}(C)$ which is of a similar form as $C$ (with different parameters), where one of the balls -- corresponding to the ball $D_{\varepsilon_j} (p_j)$ for our fixed $j$ -- is $D_{\pi - \rho}(N)$. As $h_j$ is conformal to the standard metric, we may write
\[
h_j = u_j^{\frac{4}{n-2}} g_{S^n},
\]
for some function $u_j$. By construction and assumption, we have that $h_j$ coincides with the standard metric on $\Sigma_{\rho}(S)$, $\Scal(h_j) \geq n(n-1)$ and the mean curvature of $\Sigma_{\rho}(S)$ is at least $H_\rho$. In terms of $u_j$, this becomes:
\[
\left\{\begin{array}{cl}
u_j = 1 &\text{on } \Sigma_{\rho}(S),\\
\frac{\del u_j}{\del \eta} \geq 0  &\text{on } \Sigma_{\rho}(S),\\
- \Delta u_j \geq  \frac{n(n-2)}{4} \left( u_j^{\frac{n+2}{n-2}} -  u_j \right)  &\text{in} \interior(C_j).
\end{array} \right.
\]
Hence the function $\bar{u}_j$ defined by
\[
\bar{u}_j(x) \coloneqq \left\{
\begin{array}{cl}
1 & \text{if } x \in \overline{D_{\pi - \rho}(N)},\\
u_j(x) & \text{if } x \in C_j
\end{array}\right.
\]
extends $u_j$ to $C_j \cup D_{\pi - \rho}(N)$ and is locally Lipschitz. Now extend $h_j$ by setting
\[
\bar{h}_j \coloneqq \bar{u}_j^{\frac{4}{n-2}} g_{S^n}.
\]
Pulling back $\bar{h}_j$ with $(\phi_j \circ F_j)^{-1}$, we obtain a metric $\tilde{h}_j$ on $C \cup D_{\varepsilon_j} (p_j)$ extending $h$. Combine all these by setting
\[
\tilde{h}(x) \coloneqq \tilde{h}_j(x) \text{ if } x \in C \cup D_{\varepsilon_j} (p_j).
\]

Using this extension, we show:

\begin{proposition} \label{Prop: Filling-the-holes}
The metric $h \coloneqq (\Phi^{-1})^*\tilde{g}$ can be extended to a metric $\tilde{h}$ on $S^n \setminus \left( D_{\varepsilon_i} (p_i) \cup \Lambda \right)$ which is locally Lipschitz and satisfies $\Scal(\tilde{h}) \geq n(n-1)$ weakly in the sense that $\tilde{h} = \tilde{u}^{\frac{4}{n-2}} g_{S^n}$ with
\[
-\Delta \tilde{u} \geq  \frac{n(n-2)}{4} \left( \tilde{u}^{\frac{n+2}{n-2}} -  \tilde{u} \right)
\]
weakly. With respect to $\tilde{h}$, the boundary $\del D_{\varepsilon_i} (p_i)$ has mean curvature at least $H_\rho$ and is isometric to $\Sigma_{\rho}$.
\end{proposition}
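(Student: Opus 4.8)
The plan is to verify the three asserted properties of the metric $\tilde h$ produced by the explicit gluing carried out immediately before the statement; most of this is bookkeeping, and the only analytically delicate point is the behaviour of the weak scalar curvature inequality across the seams $\Sigma_{\varepsilon_j}(p_j)$, $j\neq i$, along which the spherical caps are glued in. Throughout I would argue piece by piece over the sets $C\cup D_{\varepsilon_j}(p_j)$, $j\neq i$, whose union is $S^n\setminus(D_{\varepsilon_i}(p_i)\cup\Lambda)$ and on each of which, by construction, $\tilde h$ is the pull-back under the Möbius diffeomorphism $\phi_j\circ F_j$ of the metric $\bar h_j=\bar u_j^{4/(n-2)}g_{S^n}$ on $C_j\cup D_{\pi-\rho}(N)$.

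\emph{Regularity and the boundary $\Sigma_{\varepsilon_i}(p_i)$.} On $\interior(C_j)$ the factor $u_j$ is the conformal factor of the $C^2$ metric $h$ transported by a Möbius map, hence $C^2$ up to the hypersphere $\Sigma_\rho(S)$, while on $\overline{D_{\pi-\rho}(N)}$ it is the constant $1$; the two pieces agree (value $1$) and have bounded gradient along $\Sigma_\rho(S)$, so $\bar u_j$ is locally Lipschitz and bounded away from $0$ on compact subsets of $S^n\setminus\Lambda'$. Pulling back by $\phi_j\circ F_j$ and gluing over $C$, where all pieces equal $h$, preserves this, so $\tilde h$ is a locally Lipschitz Riemannian metric. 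Near $\Sigma_{\varepsilon_i}(p_i)$ nothing was altered, so there $\tilde h=h=(\Phi^{-1})^*\tilde g$ and $\Phi^{-1}\colon(C,h)\to(\tilde M,\tilde g)$ is an isometry carrying $\Sigma_{\varepsilon_i}(p_i)$ onto a boundary component of $\tilde M$; since $\del M$ is simply connected, the Riemannian covering $\tilde M\to M$ restricts to an isometry of that component onto $\del M$, which is isometric to $\Sigma_\rho$ and has mean curvature at least $H_\rho$ for the inner normal. As both identifications send inner normal to inner normal, $\Sigma_{\varepsilon_i}(p_i)$ is isometric to $\Sigma_\rho$ and has mean curvature at least $H_\rho$ in $(S^n\setminus(D_{\varepsilon_i}(p_i)\cup\Lambda),\tilde h)$.

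\emph{The weak scalar curvature bound.} Write $\tilde h=\tilde u^{4/(n-2)}g_{S^n}$; the factor is globally well defined because the pieces agree on overlaps. Away from the seams $\Sigma_{\varepsilon_j}(p_j)$, the conformal transformation law for the scalar curvature together with $\Scal(h)\geq n(n-1)$ on $\interior(C)$ and $\Scal(\tilde h)=n(n-1)$ on each glued cap (where $\tilde h$ is the standard metric) gives $-\Delta\tilde u\geq\frac{n(n-2)}{4}\bigl(\tilde u^{(n+2)/(n-2)}-\tilde u\bigr)$ classically. It therefore remains to cross the seams; since for $w>0$ the inequality $-\Delta w\geq\frac{n(n-2)}{4}(w^{(n+2)/(n-2)}-w)$ just says that $w^{4/(n-2)}g_{S^n}$ has scalar curvature $\geq n(n-1)$ weakly, a condition invariant under pulling back by a conformal diffeomorphism of $(S^n,g_{S^n})$, it suffices to check it for $\bar u_j$ across $\Sigma_\rho(S)$. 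There $\bar u_j$ is continuous and piecewise $C^2$, so as distributions $\Delta\bar u_j=\{\Delta\bar u_j\}-\frac{\del u_j}{\del\eta}\,\delta_{\Sigma_\rho(S)}$, where $\{\,\cdot\,\}$ denotes the classical part, $\eta$ is the unit normal pointing out of $C_j$, and the constant cap side contributes nothing. The construction already records $\frac{\del u_j}{\del\eta}\geq 0$ on $\Sigma_\rho(S)$ (this is where $H(g)\geq H_\rho$ enters, through the conformal transformation law for the mean curvature), so $-\Delta\bar u_j=\{-\Delta\bar u_j\}+\frac{\del u_j}{\del\eta}\,\delta_{\Sigma_\rho(S)}\geq\{-\Delta\bar u_j\}\geq\frac{n(n-2)}{4}(\bar u_j^{(n+2)/(n-2)}-\bar u_j)$ weakly. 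A partition of unity subordinate to the cover by the sets $C\cup D_{\varepsilon_j}(p_j)$ then upgrades this to the asserted weak inequality for $\tilde u$ on all of $S^n\setminus(D_{\varepsilon_i}(p_i)\cup\Lambda)$.

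\emph{Main obstacle.} The one real point is the last step: one must know that the Lipschitz corner of $\tilde h$ along each filled seam bends in the favourable direction, so that the Dirac term in the distributional Laplacian of $\bar u_j$ has the right sign and the weak inequality survives. That sign is precisely $\frac{\del u_j}{\del\eta}\geq 0$, i.e.\ the mean curvature lower bound after the Poincaré normalisation. Everything else — consistency of the various Möbius normalisations, agreement of the scalar and mean curvature conventions under the normal-preserving identifications, and persistence of the local Lipschitz estimate near the limit set, where infinitely many caps may accumulate — is routine.
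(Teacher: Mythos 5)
Your proposal is correct and follows essentially the same route as the paper: the decisive point in both is that the mean-curvature hypothesis, via the conformal transformation law and the fact that the glued cap has boundary mean curvature exactly $-H_\rho$ (equivalently, constant conformal factor $1$ after the Poincaré normalisation), forces the jump of the one-sided normal derivatives across each seam to have the favourable sign, after which the weak inequality follows by integration by parts. The only difference is presentational: you compute the distributional Laplacian seam-by-seam in the normalised picture and globalise via Möbius invariance of the weak inequality and a partition of unity, whereas the paper performs a single Green's-formula computation for $\tilde{u}$ in the original picture using both one-sided conformal mean-curvature relations at $\del D_{\varepsilon_j}(p_j)$.
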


\begin{proof}
Writing $\tilde{h}$ constructed above as
\[
\tilde{h} = \tilde{u}^{\frac{4}{n-2}} g_{S^n},
\]
it remains to show that the extension $\tilde{u}$ satisfies
\begin{align} \label{Eq:Eq-for-u}
-\Delta \tilde{u} \geq  \frac{n(n-2)}{4} \left( \tilde{u}^{\frac{n+2}{n-2}} -  \tilde{u} \right)
\end{align} 
weakly. Note that, by construction, $\tilde{u}$ satisfies \eqref{Eq:Eq-for-u} in $\interior (C)$ and on every $D_{\varepsilon_j} (p_j)$, so we only need to take care of the boundary values. 

Let $H^j  =\cot(\varepsilon_j) >0 $ be the mean curvature of $D_{\varepsilon_j} (p_j)$ with respect to the spherical metric and computed with respect to the inner unit normal $\nu = -\eta$ (pointing into $D_{\varepsilon_j} (p_j)$). 

Since, by construction, $(D_{\varepsilon_j} (p_j), \tilde{h})$ is isometric to $D_{\pi - \rho}$ with the standard metric, the mean curvature of the boundary with respect to $\tilde{h}$ viewed from the inside is $-H_\rho$.
Writing
\[
 \frac{\del \tilde{u}}{\del \eta^\pm}(x) \coloneqq \lim_{\varepsilon \to 0^+}   \frac{\tilde{u}(\cos (\pm \varepsilon)x + \sin( \pm\varepsilon) \eta) - \tilde{u} (x)}{\pm \varepsilon}
\]
for the one-sided derivatives, this implies 
\[
 \frac{n-2}{2}(-H_\rho)\tilde{u}^{\frac{n}{n-2}}  = \frac{n-2}{2} H^j \tilde{u} + \frac{\del \tilde{u}}{\del \eta^-}
\]
on $\del D_{\varepsilon_j} (p_j)$. By hypothesis on $\tilde{h}$, we also have
\[
\frac{n-2}{2}H_\rho \tilde{u}^{\frac{n}{n-2}} \leq \frac{n-2}{2}H(\tilde{h}) \tilde{u}^{\frac{n}{n-2}} = - \frac{n-2}{2} H^j \tilde{u} + \frac{\del \tilde{u}}{\del \nu^-}.
\]
Adding these inequalities, we obtain
\[
\frac{\del \tilde{u}}{\del \eta^-} + \frac{\del \tilde{u}}{\del \nu^-} \geq 0, \text{  i.e.  } \frac{\del \tilde{u}}{\del \eta^-} \geq - \frac{\del \tilde{u}}{\del \nu^-} = \frac{\del \tilde{u}}{\del \eta^+}.
\] 
Let $\varphi \in \mathcal{D}^+(S^n \setminus ( \overline{D_{\varepsilon_i} (p_i)} \cup \Lambda ) ) = \{ \phi \in C^\infty_c (S^n \setminus ( \overline{D_{\varepsilon_i} (p_i)} \cup \Lambda )) \mid \phi \geq 0 \}$ be a smooth test function. Using Green's formula, we have:

\begin{align*}
\int_{S^n \setminus \left( D_{\varepsilon_i} (p_i) \cup \Lambda \right)} \hspace{-\vierz}&\hspace{\vierz}\tilde{u} (- \Delta \varphi)\\
 &=  \int_C  \tilde{u} (- \Delta \varphi) + \sum_{j\neq i} \int_{D_{\varepsilon_j} (p_j)} \tilde{u} (- \Delta \varphi) \\
&= \int_C  \varphi (- \Delta \tilde{u})   + \sum_{j\neq i} \int_{D_{\varepsilon_j} (p_j)} \varphi (- \Delta \tilde{u}) \\
&\qquad -  \int_{\del C} \left( \tilde{u}\frac{\del \varphi}{\del \nu}  - \varphi \frac{\del \tilde{u}}{\del \nu^-} \right) - \sum_{j\neq i} \int_{\del  D_{\varepsilon_j} (p_j)} \left( \tilde{u}\frac{\del \varphi}{\del \eta}  - \varphi \frac{\del \tilde{u}}{\del \eta^-} \right)\\
&=  \int_C  \varphi (- \Delta \tilde{u})   + \sum_{j\neq i} \int_{D_{\varepsilon_j} (p_j)} \varphi (- \Delta \tilde{u}) \\
& \qquad + \int_{\del C} \varphi \left( \frac{\del \tilde{u}}{\del \eta^-}- \frac{\del \tilde{u}}{\del \eta^+}  \right)\\
&\geq \frac{n(n-2)}{4} \left[ \int_C  \varphi   \left( \tilde{u}^{\frac{n+2}{n-2}} -  \tilde{u} \right) +  \sum_{j\neq i} \int_{D_{\varepsilon_j} (p_j)} \varphi \left( \tilde{u}^{\frac{n+2}{n-2}} -  \tilde{u} \right) \right]\\
&= \frac{n(n-2)}{4} \int_{S^n \setminus \left( D_{\varepsilon_i} (p_i) \cup \Lambda \right)}  \left( \tilde{u}^{\frac{n+2}{n-2}} -  \tilde{u} \right) \varphi.
\end{align*}
\end{proof}

Using this extension, we conclude the argument:

\begin{proposition} \label{Prop:M-is-simply-connected}
Under the assumptions of Theorem \ref{Thm:Theorema}, $M$ is simply-connected and isometric to $\overline{D_\rho}$. 
\end{proposition}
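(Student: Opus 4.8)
The plan is to combine Proposition~\ref{Prop: Filling-the-holes} with the rigidity result of Hang and Wang, and then derive a contradiction from the existence of more than one hole unless $\pi_1(M)$ is trivial. First I would fix the index $i$ and consider the extended metric $\tilde h$ on $S^n \setminus (D_{\varepsilon_i}(p_i) \cup \Lambda)$ provided by Proposition~\ref{Prop: Filling-the-holes}; it is locally Lipschitz, conformal to $g_{S^n}$, satisfies $\Scal(\tilde h) \ge n(n-1)$ weakly, and the boundary $\Sigma_{\varepsilon_i} = \del D_{\varepsilon_i}(p_i)$ is isometric to $\Sigma_\rho$ with mean curvature at least $H_\rho$. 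Applying a Möbius transformation of $S^n$ I may assume $D_{\varepsilon_i}(p_i) = D_\rho(N)$ for an antipodal pair $\{S,N\}$ and, as in Step~2 of the simply-connected proof, after a further Poincaré-extended Möbius transformation that $\tilde h$ agrees with $g_{S^n}$ on $\Sigma_\rho(S)$ (being umbilic is conformal-invariant and the boundary isometry is automatically Möbius since $\tilde h$ is conformally flat). Thus $\tilde h$ is a weak solution of the Hang–Wang boundary value problem on $\overline{D_\rho(S)}$ — or rather on $S^n \setminus (D_\rho(N) \cup \Lambda)$, which contains $\overline{D_\rho(S)}$ when $\rho \le \pi/2$.

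The key point is that the Hang–Wang rigidity statements (Claim~3.5 of \cite{HangWang} when $\rho = \pi/2$, and Proposition~1 of \cite{HangWang2} when $\rho < \pi/2$) apply to weak solutions: one checks that the original arguments — which run a maximum-principle / moving-plane or integral argument on the conformal factor $\tilde u$ — only use the differential inequality $-\Delta \tilde u \ge \frac{n(n-2)}{4}(\tilde u^{(n+2)/(n-2)} - \tilde u)$ in the weak sense together with the boundary conditions $\tilde u = 1$, $\partial_\eta \tilde u \ge 0$, exactly the hypotheses produced in Proposition~\ref{Prop: Filling-the-holes}. (The limit set $\Lambda$ has Hausdorff dimension at most $\frac{n-2}{2} < n-2$, so it has zero capacity and is removable for these bounded weak solutions; I would note this explicitly.) Hence $\tilde h$ coincides with the standard metric $g_{S^n}$ on all of $S^n \setminus (D_\rho(N) \cup \Lambda)$, in particular $\tilde u \equiv 1$ there.

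Now comes the contradiction. Since $\tilde u \equiv 1$, the metric $h = (\Phi^{-1})^*\tilde g$ on $C = S^n \setminus (\bigcup_j D_{\varepsilon_j}(p_j) \cup \Lambda)$ is exactly $g_{S^n}$, so $\Phi \colon \tilde M \to C$ is an isometry onto an open subset of the round sphere. But if there were a second hole $D_{\varepsilon_j}(p_j)$ with $j \ne i$, then the extension construction forced $(D_{\varepsilon_j}(p_j), \tilde h)$ to be isometric to the spherical cap $\overline{D_{\pi-\rho}}$; gluing this in along $\Sigma_\rho$ and using $\tilde u \equiv 1$ on the complement would make $S^n \setminus (D_\rho(N) \cup \Lambda)$ — with the round metric — contain in its interior a point whose distance to $\Sigma_{\varepsilon_j}(p_j)$ equals $\pi - \rho$ on one side and at most $\rho$ on the other, which is impossible for $\rho < \pi/2$; more cleanly, $\Lambda = \emptyset$ and $C = S^n \setminus (D_\rho(N) \cup D_{\varepsilon_j}(p_j))$ is then an honest open subset of $S^n$, but $C$ carries the round metric and also a complete extension, forcing $C$ itself to be all of $S^n$ minus one cap, i.e.\ there is exactly one hole. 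Therefore $\tilde M$ has a single boundary component, $\Lambda = \emptyset$, so $\tilde M$ is compact and the covering $\tilde M \to M$ is finite; but $\Phi$ identifies $\tilde M$ with $\overline{D_\rho}$, whose only boundary component is a single sphere, so $\del \tilde M \to \del M$ is a homeomorphism, forcing the deck group to be trivial. Hence $M = \tilde M$ is simply-connected and isometric to $\overline{D_\rho}$.

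The main obstacle I expect is the removable-singularity / weak-regularity bookkeeping: one must verify carefully that the Hang–Wang rigidity arguments genuinely go through for the merely locally Lipschitz $\tilde u$ satisfying only a weak differential inequality across the gluing hypersurfaces and across the (possibly non-empty, low-dimensional) limit set $\Lambda$, rather than for smooth solutions on a smooth domain. The cleanest route is to invoke that $\Lambda$ has zero $p$-capacity for $p$ near $2$ — which follows from $\dim_{\mathcal H}\Lambda \le \frac{n-2}{2}$ — so that test functions supported away from $\Lambda$ suffice and $\tilde u$ extends weakly across $\Lambda$; after that the Hang–Wang machinery is purely local and the remaining topological deductions are straightforward.
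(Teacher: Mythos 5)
There is a genuine gap, and it sits exactly where you wave your hands: the claim that the Hang--Wang rigidity results ``apply to weak solutions'' after a removable-singularity argument for $\Lambda$. First, your justification of removability is based on a false premise: the conformal factor is \emph{not} bounded near the limit set -- by \cite[Proposition 2.6]{SY} one has $v(x)\to\infty$ as $x\to\pi(\Lambda)$, so ``zero capacity, hence removable for these bounded weak solutions'' does not apply as stated (the inequality does extend across $\Lambda$, cf.\ \cite[Theorem 5.1]{SY}, but via superharmonicity and a lower bound, and even then one is left with an unbounded, merely locally Lipschitz supersolution). Second, even granting the extension, ``one checks that the original arguments go through'' is precisely the verification the paper is structured to avoid. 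The paper's proof never applies Hang--Wang to the nonsmooth $v$: it excises an open neighbourhood $\Lambda_m\subseteq v^{-1}((m,\infty))\cup\pi(\Lambda)$ with smooth boundary (possible \emph{because} $v$ blows up at $\Lambda$), solves the boundary value problem \eqref{Eq:Eq-for-v2} on $\Omega_m=\overline{B_r}\setminus\Lambda_m$ by the sub/supersolution method to produce a genuine $C^2$ solution $v'$ with $v\geq v'$, applies \cite[Proposition 1]{HangWang2} to the smooth $v'$ only to obtain the \emph{inequality} $v\geq w$, and then derives the equality $v\equiv w$ on the component containing $\del B_r$ by a hand-run Hopf-lemma argument at $\del B_r$ that uses the mean curvature hypothesis $H(g)\geq H_\rho$ through the conformal boundary equation. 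Your proposal contains none of this mechanism, and the final ``counting of holes'' contradiction is correspondingly vague; in the paper it is the identity $v\equiv w$ on that component which is incompatible with the other boundary spheres $\del D_{\varepsilon_j}(p_j)$ (which have closures inside $D_\rho(S)$, hence $\varepsilon_j<\rho$) being isometric to $\Sigma_\rho$ with respect to $\pi^*h$.

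A smaller but real slip: you normalise the distinguished hole to $D_{\rho}(N)$, so that the comparison region becomes $\overline{D_{\pi-\rho}(S)}\setminus\Lambda$, a cap \emph{larger} than a hemisphere when $\rho<\tfrac{\pi}{2}$; the Hang--Wang results do not cover that situation (and the rigidity statement is false beyond the hemisphere), and restricting afterwards to $\overline{D_\rho(S)}$ loses the boundary conditions, which live on $\del D_\rho(N)=\Sigma_{\pi-\rho}(S)$. The correct normalisation, as in the paper, sends the hole to $D_{\pi-\rho}(N)$, so that the boundary data $u=1$, $\del u/\del\eta\geq 0$ sit on $\Sigma_\rho(S)$ and the comparison region $\overline{D_\rho(S)}\setminus\Lambda$ lies in the closed hemisphere. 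So while your overall outline (use Proposition \ref{Prop: Filling-the-holes}, normalise by Möbius transformations and Poincaré extension, invoke Hang--Wang, rule out extra holes) matches the paper's, the technical core -- how to get from a locally Lipschitz weak supersolution with a bad set $\Lambda$ to a situation where Hang--Wang's smooth rigidity can legitimately be quoted -- is missing.
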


\begin{proof} Let $\tilde{h}$ be the metric constructed in Proposition \ref{Prop: Filling-the-holes}.
As above (i.e.\ by pulling back $\tilde{h}$ with a Möbius transformation), we may assume that $D_{\varepsilon_i} (p_i) = D_{\pi- \rho}(N)$ and $\tilde{h}$ restricted to the boundary $\del D_{\pi- \rho}(N) = \Sigma_{\rho} (S)$ coincides with the restriction of the standard metric $g_{S^n}$. 

As in the simply-connected case, it is convenient to transfer the problem to $\R^n$ via stereographic projection. Let $\pi \colon S^n \setminus \{ N \} \to  \R^n$ be the stereographic projection from $N$, then $\pi(D_\rho(S)) = B_{r}(0) \eqqcolon B_r$, where $r = \tan (\rho/2)$.

Note that $\pi^*g_{S^n} = w^{\frac{4}{n-2}} \sum \d x^i \otimes \d x^i$, where
\[
w(x) \coloneqq \left(\frac{2}{1+ |x|^2} \right)^\frac{n-2}{2}.
\]
We consider the metric $\pi^* \tilde{h}$, which is a metric on the set $\overline{B_{r}} \setminus \pi(\Lambda)$. Writing
\[
\pi^* \tilde{h} =  v^{\frac{4}{n-2}} \sum_{i = 1}^n \d x^i \otimes \d x^i,
\]
we obtain $v \in C^{0, 1}_{\text{loc}}( \overline{B_r} \setminus \pi(\Lambda))$ positive satisfying
\begin{equation} \label{Eq:Eq-for-v}
\left\{\begin{array}{cl}
v = \left( \frac{2}{1+r^2} \right)^{\frac{n-2}{2}} &\text{on } \del B_r,\\
- \Delta v \geq  \frac{n(n-2)}{4} v^{\frac{n+2}{n-2}}  &\text{weakly in } B_r \setminus \pi(\Lambda).
\end{array} \right.
\end{equation}

\begin{remark}
The latter equation actually holds on the whole of $B_r$, see \cite[Theorem 5.1]{SY} 
\end{remark}

Let $m \geq 2^{\frac{n-2}{2}}$ and $\Lambda_m \subseteq \left( v^{-1}((m, \infty)) \cup \pi( \Lambda) \right)$ be an open set with smooth boundary which contains $\pi(\Lambda)$. Such $\Lambda_m$ exists as $v(x) \to \infty$ for $x \to \pi(\Lambda)$, see \cite[Proposition 2.6]{SY}. If $\Lambda = \emptyset$, we set $\Lambda_m \coloneqq \emptyset$. Set $\Omega_m \coloneqq \overline{B_r} \setminus \Lambda_m$.
 
Since $- \Delta v \geq \frac{n(n-2)}{4} v^{\frac{n+2}{n-2}} > 0 $, $v$ is superharmonic. Hence the maximum principle implies
\[
v \geq \inf_{x \in \del \Omega_m} v(x)  = \left(\frac{2}{1+ r^2} \right)^\frac{n-2}{2}.
\]
Consider the boundary value problem

\begin{equation} \label{Eq:Eq-for-v2}
\left\{\begin{array}{cl}
- \Delta f = \frac{n(n-2)}{4} f^{\frac{n+2}{n-2}} &\text{ in } \Omega_m ,\\
f(x) = \left(\frac{2}{1+ |x|^2} \right)^\frac{n-2}{2}   &\text{on } \del \Omega_m.\\
\end{array} \right.
\end{equation}
As $m \geq 2^{\frac{n-2}{2}}$,  $v$ is a supersolution while the constant function $\left(\frac{2}{1+ r^2} \right)^\frac{n-2}{2}$ is a subsolution. Hence there exists a solution $v'$ of  \eqref{Eq:Eq-for-v2} with 
\[ 
v \geq v' \geq \left(\frac{2}{1+ r^2} \right)^\frac{n-2}{2} .
\]
 (see e.g. \cite{ClementSweers}). By standard regularity theory, $v' \in C^2(\overline{\Omega_m})$.

Consider the metric $v'^{\frac{4}{n-2}} \sum \d x^i \otimes \d x^i$. Identifying the standard spherical metric with $w^{\frac{4}{n-2}} \sum \d x^i \otimes \d x^i$, we can apply \cite[Proposition 1]{HangWang2} to conclude
\[
v'(x) \geq w(x) = \left(\frac{2}{1+ |x|^2} \right)^\frac{n-2}{2}
\]
and hence $v \geq w$.
Since 
\[
- \Delta(v - w) \geq \frac{n(n-2)}{4} \left( v^{\frac{n+2}{n-2}} - w^{\frac{n+2}{n-2}} \right) \geq 0,
\]
$v- w$ is superharmonic. As $v -w  \geq 0$ and $v = w$ on $\del B_r$, the Hopf lemma implies $\frac{\del}{\del \eta} (v - w) \leq 0$ there. Let $H^r = r^{-1}$ be the mean curvature of $\del B_r$ with respect to the euclidean metric. Then on $\del B_r$, we have
\begin{align*}
\frac{\del v}{\del \eta} &=   \frac{n-2}{2} H(g) v^{\frac{n}{n-2}}  - \frac{n-2}{2} H^r v \\
&\geq \frac{n-2}{2} H_\rho  v^{\frac{n}{n-2}}  - \frac{n-2}{2} H^r v\\
&= \frac{n-2}{2} H_\rho w^{\frac{n}{n-2}}  - \frac{n-2}{2} H^r w = \frac{\del w}{\del \eta},
\end{align*}
which implies $v= w$ on the connected component of $\del B_r$ by the Hopf lemma. For large $m$, this component must contain some of the $\pi(D_{\varepsilon_j} (p_j))$, if there were any.  This would contradict the fact that, with respect to $\pi^*h$, those are isometric to $\Sigma_\rho$. Hence there are no $D_{\varepsilon_j} (p_j)$, which implies that $\del \tilde{M}$ is connected and therefore $M$ is simply-connected. Thus also $\Lambda = \emptyset$ and $v = w$ on the whole of $B_r$.
\end{proof}

\bibliography{bibliography}{}
\bibliographystyle{amsalpha}

\end{document}